\makeatletter \@addtoreset{equation}{section} \makeatother
\renewcommand\thetable{\thesection.\@arabic\c@table}
\theoremstyle{plain}
\newtheorem{maintheorem}{Theorem}
\newtheorem{maincorollary}{Corollary}
\newtheorem{theorem}{Theorem }[section]
\newtheorem{lemma}[theorem]{Lemma}
\theoremstyle{definition} \theoremstyle{remark}
\newtheorem{remark}[theorem]{Remark}
\newtheorem{example}[theorem]{Example}
\newtheorem{definition}[theorem]{Definition}
\def \a{\alpha}
\def \d{\delta}
\def \e{\varepsilon}
\def \eps{\varepsilon}
\def \R{\mathbb{R}}
\newcommand{\si}{\sigma}
\newcommand{\Si}{\Sigma}
\newcommand{\cM}{\mathcal M}
\newcommand{\de}{\delta}
\newcommand{\cQ}{\mathcal Q}
\newcommand{\cE}{\mathcal{E}}
\begin{document}

\title{Entropy formulas for dynamical systems with mistakes}
\author{J\'er\^ome Rousseau and  Paulo Varandas and Yun Zhao}

\address{J\'er\^ome Rousseau, Universit\'e Europ\'eenne de Bretagne, Universit\'e de Brest, Laboratoire de Math\'ematiques CNRS UMR 6205, 6 avenue Victor le Gorgeu, CS93837, F-29238 Brest Cedex 3, France}
\email{jerome.rousseau@univ-brest.fr}
\urladdr{http://pageperso.univ-brest.fr/$\sim$rousseau}

\address{Paulo Varandas, Departamento de Matem\'atica, Universidade Federal da Bahia\\
Av. Ademar de Barros s/n, 40170-110 Salvador, Brazil.}
\email{paulo.varandas@ufba.br}
\urladdr{www.pgmat.ufba.br/varandas}

\address{Yun Zhao, Department of Mathematics, Soochow University
Suzhou 215006, P.R. China}
\email{zhaoyun@suda.edu.cn}

\footnotetext{ Research supported in part by NSFC(11001191 ), by
NSF in Jiangsu Province(09KJB110007), by a Pre-research Project of
Soochow University, by CNPq-Brazil, by FAPESB-Brazil and by
DynEurBraz.}
 \footnotetext{2000 {\it Mathematics Subject classification}:
 37B20, 37A35}

\date{\today}

\maketitle

\begin{abstract}
We study the recurrence to
mistake dynamical balls, that is, dynamical balls that admit some
errors and whose proportion of errors decrease tends to zero with
the length of the dynamical ball. We prove, under mild assumptions, that the
measure-theoretic entropy coincides with the exponential growth
rate of return times to mistake dynamical balls and that minimal
return times to mistake dynamical balls grow linearly with respect
to its length. Moreover we obtain averaged recurrence formula for
subshifts of finite type and suspension semiflows. Applications include
$\beta$-transformations, Axiom A flows and suspension semiflows
of maps with a mild specification property. In particular we extend some results from
\cite{ch,msuz,var} for mistake dynamical balls.
\end{abstract}

\section{Introduction.}
\setcounter{section}{1}
\setcounter{equation}{0}
 Throughout this paper, $(X,f)$ denotes a topological
 dynamical systems (TDS for short) in the sense that $f:X\rightarrow X$ is a continuous transformation
 on the compact metric space $X$ with the metric $d $. Invariant Borel probability measures are
associated with $(X,f)$. The terms $\mathcal{M}(X,f)$ and
$\mathcal{E}(X,f)$  represent  the space of $f$-invariant Borel
probability measures and the set of $f$-invariant ergodic Borel
probability measures, respectively.

The well known notions of topological and measure-theoretic entropy
constitute important invariants in the characterization of the complexity of a dynamical system.
Just as an illustration let us mention that the measure-theoretic entropy
 turned out to be a surprisingly universal concept in ergodic
 theory since it appears in the study of different subjects as
 information theory. We refer the reader to \cite{kat} for a rather complete overview.

An important characteristic of invariant measures is recurrence. Poincar\'e recurrence theorem is one of the basic
but fundamental results of the theory of dynamical  systems and it essentially states that each dynamical  system
 preserving a finite invariant measure exhibits a non-trivial recurrence to each set with positive measure. More precisely, it asserts that
if $A\subset X$ is a measurable subset of positive $\mu$-measure,
then $\hbox{Card}\{n: f^nx\in A\}=\infty$ for $\mu$-almost every
point $x\in A$.

Given a dynamical system $(X,f)$, a natural question is: which kind of further information can be obtained
when the subset $A$ is replaced by a decreasing sequence of sets $U_n$?
 There are some interesting results for this question. Ornstein and Weiss \cite{ow}
 proved that the entropy $h_{\mu}(f,\mathcal{Q})$ of an ergodic
 measure $\mu$ with respect to a partition $\mathcal{Q}$ is given
 by the almost everywhere well-defined limit
 \[
 h_{\mu}(f, \mathcal{Q})=\lim_{n\rightarrow\infty} \frac{1}{n}
 \log R_n(x,\mathcal{Q})
 \]
where $R_n(x,\mathcal{Q})=\inf \{ k\geq 1: f^k(x)\in
\mathcal{Q}^{n}(x)\}$ is the $nth~return~time$ with respect to the
partition $\mathcal{Q}$,
$\mathcal{Q}^n=\bigvee_{i=0}^{n-1}f^{-i}\mathcal{Q}$ is the
refined partition and $\mathcal{Q}^{n}(x)$ denotes the element of
$\mathcal{Q}^n$ which contains the point $x$. In consequence, the
measure-theoretic entropy is the supremum of the exponential
growth rates of Poincar\'e recurrences over all finite measurable
partitions. Downarowicz and  Weiss \cite{dw} proved that the
measure-theoretic entropy is given by the exponential growth rate
of return times to dynamical balls. More recently,  in the study of the relation
betweed entropy, dimension and Lyapunov exponents, the second author
in~\cite{var} used combinatorial arguments to provide an
alternative and more direct proof of this result. Namely, when
$\mu$ is an $f$-invariant ergodic measure, the measure theoretical
entropy can be computed for $\mu$-almost every $x\in X$ by the
following limits:
\[
h_{\mu}(f)=\lim_{\e\rightarrow
0}\limsup_{n\rightarrow\infty}\frac{1}{n} \log
R_n(x,\e)=\lim_{\e\rightarrow
0}\liminf_{n\rightarrow\infty}\frac{1}{n} \log R_n(x,\e),
\]
where $R_n(x,\e)=\inf \{ k\geq 1: f^k(x)\in B_n(x,\e)\}$ is the first return time to the dynamical ball
$B_n(x,\e)=\{ y\in X: d(f^i(x),f^i(y))<\e,~0\leq i\leq n-1\}$. In particular, return times to dynamical
balls grow exponentially fast with respect to every ergodic measure with positive entropy.
The same result is no longer true for minimal return times. Indeed,
when $f$ has the specification property, in \cite{var}
the second author obtained also that the minimal return times to dynamical balls defined by
$S_n(x,\e)=\inf \{ k\geq 1: f^{-k}(B_n(x,\e))\cap B_n(x,\e)\neq \emptyset\}$
grow linearly with $n$, that is,
\[
1=\lim_{\e\rightarrow
0}\limsup_{n\rightarrow\infty}\frac{1}{n}S_n(x,\e)=\lim_{\e\rightarrow
0}\liminf_{n\rightarrow\infty}\frac{1}{n}S_n(x,\e)\qquad\textrm{for $\mu$-a.e. $x\in
X$},
\]
for any $\mu\in \mathcal{E}(X,f)$ satisfying $h_{\mu}(f)>0$.

More recently, Marie and Rousseau~\cite{mr} initiated the study of recurrence
properties for random dynamical systems. The authors established relations between
random recurrence rates and local dimensions of the stationary measure of
the random dynamical systems under some natural assumptions. To
the best of our knowledge, this is the first step in the study of
recurrence behavior in random dynamical systems. We report some progress to obtain
Ornstein-Weiss type of formulas in the random setting in~\cite{rvz}.

An important contribution was given also by Maume-Deschamps, Schmitt, Urbanski
and Zdunik in \cite{msuz}, where the authors explored the connection between recurrence
and topological pressure of any H\"older continuous potential for subshifts of
finite type. In fact, the authors studied  return time with some
weighted function, and they obtained some interesting relations
between pressure and return times. Related results were obtained
by Meson and Vericat in \cite{mv} for the more general setting of homeomorphisms with
the specification property.

Here we will refer to return times to mistake dynamical balls, whose precise definition
will be given later on.
Roughly, when a physical process evolves it is natural that it may change or that
some errors are committed in the evaluation of orbits. However, if
the system is self adaptable the amount proportion of errors
should decrease as the time evolves. This gives us the motivation
to consider return times to mistake dynamical balls, whose
formalization is also in connection with the almost specification
property introduced by Pfister and Sullivan~\cite{ps} and  Thompon~\cite{th}.
We refer the reader to the beginning of the next section for the precise definition
of mistake dynamical balls.
Since the proportion of admissible errors
decreases as the time evolves we can prove that the
measure-theoretic entropy is given by the exponential growth rate
of return times to mistake dynamical balls, that the minimal
return times to mistake dynamical balls grow linearly with respect
to its length and obtain some formula connecting the topological
pressure to weighted recurrence. Moreover, we also obtain a
generalization of an entropy formula due to Chazottes~\cite{ch} for
suspension semiflows. Since our main results require a mild specification
property we are able to give applications to the $\beta$-transformation
and the corresponding suspension semiflow.
Finally, we expect these results to have a wider range of applications and to
open the way to the study of other properties as the relation between recurrence to balls
(eventually for non-uniformly expanding maps) and pointwise
dimension, as well as the multifractal formalism for this notion of
mistake recurrence.

The remainder of this paper is organized as follows. In
Section 2 we state our main results and give some applications to
$\beta$-transformations and suspension semiflows. In Section 3 we
recall some definitions and present some preliminary results.
Finally, the proofs of the main results are given in Section 4.

\section{Statement of the main results}
\setcounter{section}{2} \setcounter{equation}{0}

In this section we give some definitions and set the context for
our main results. First we recall the definitions of mistake
function and mistake dynamical balls  which are due to Thompson,
Pfister and Sullivan \cite{ps,th}.

\begin{definition}
Given $\e_0>0$ the function $g:\mathbb{N}\times (0,\e_0]\rightarrow
\mathbb{N}$ is called a \emph{mistake function} if for all $\e\in
(0,\e_0]$ and all $n\in \mathbb{N}$, $g(n,\e)\leq g(n+1,\e)$ and
\[
\lim_{n\rightarrow\infty}\frac{g(n,\e)}{n}=0.
\]
By a slight abuse of notation we set $g(n,\e)=g(n,\e_0)$ for every $\e>\e_0$.
\end{definition}

For any subset of integers $\Lambda\subset [0,N]$, we will use the
family of distances in the metric space $X$ given by
$d_\Lambda(x,y)=\max \{d(f^ix,f^iy):i\in\Lambda\}$ and consider
the balls $B_{\Lambda}(x,\e)=\{y\in X:d_\Lambda(x,y)<\e\}$.

\begin{definition}
Let $g$ be a mistake function, $\e>0$ and $n\ge 1$. The \emph{mistake dynamical ball} $B_n(g;x,\e)$
of radius $\e$ and length $n$ associated to $g$ is defined by
\begin{eqnarray*}
B_n(g;x,\e)&=&\{ y\in X\mid y\in B_{\Lambda}(x,\e)~\hbox{for
some}~\Lambda\in I(g;n,\e)\}\\
&=&\bigcup_{\Lambda\in I(g;n,\e)}B_{\Lambda}(x,\e)
\end{eqnarray*}
where $I(g;n,\e)=\{ \Lambda\subset [0,n-1]\cap\mathbb{N}\mid
\# \Lambda \geq n-g(n,\e)\}$.
\end{definition}

For every mistake function $g$, we associate the \emph{first
return time} $R_n(g;x,\e)$ to the mistake dynamical ball
$B_n(g;x,\e)$ by $R_n(g;x,\e)=\inf \{ k\geq 1: f^k(x)\in
B_n(g;x,\e)\}.$ Our first result reflects a stability of the
metric entropy even if a small amount of errors is commited when
compared with the original orbits.

\begin{maintheorem}\label{thm.A}
Let $(X,f)$ be a TDS and let $g$ be any mistake function.
For every $\mu\in \mathcal{E}(X,f)$ the limits
 \[
 \overline{h}_g(f,x)=\lim_{\e\rightarrow 0}\limsup_{n\rightarrow
 \infty}\frac{1}{n}\log R_n(g;x,\e)~~and~~\underline{h}_g(f,x)=\lim_{\e\rightarrow 0}\liminf_{n\rightarrow
 \infty}\frac{1}{n}\log R_n(g;x,\e)
\]
exist for $\mu$-almost every $x$ and coincide with the
measure-theoretic entropy $h_{\mu}(f).$
\end{maintheorem}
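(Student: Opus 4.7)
The plan is to show $h_\mu(f) \leq \underline{h}_g(f,x) \leq \overline{h}_g(f,x) \leq h_\mu(f)$ for $\mu$-almost every $x$.

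\textbf{Upper bound.} First I would observe that the full index set $\Lambda = \{0, 1, \ldots, n-1\}$ lies in $I(g;n,\e)$, so $B_n(x,\e) \subseteq B_n(g;x,\e)$ and therefore $R_n(g;x,\e) \leq R_n(x,\e)$. The bound $\overline{h}_g(f,x) \leq h_\mu(f)$ then follows immediately from the classical Downarowicz--Weiss/Varandas identity recalled in the introduction, namely $\lim_{\e \to 0}\limsup_n n^{-1}\log R_n(x,\e) = h_\mu(f)$ for $\mu$-a.e.\ $x$.

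\textbf{Lower bound.} Here I would reduce to a partition-based mistake return time and adapt an Ornstein--Weiss argument to this setting. Fix $\delta > 0$ small and pick a finite measurable partition $\mathcal{Q}$ with $h_\mu(f, \mathcal{Q}) > h_\mu(f) - \delta$ and $\diam \mathcal{Q} < \e_0$ small enough that every $\e_0$-ball is contained in a single $\mathcal{Q}$-atom. For $\e < \e_0$ one has
\[
B_n(g;x,\e) \subseteq \mathcal{Q}^g_n(x) := \bigcup_{\Lambda \in I(g;n,\e)} \bigl\{y : \mathcal{Q}(f^i y) = \mathcal{Q}(f^i x)\ \forall\, i \in \Lambda\bigr\},
\]
so $R_n(g;x,\e)$ is bounded below by $R^g_n(x, \mathcal{Q}) := \inf\{k \geq 1 : f^k x \in \mathcal{Q}^g_n(x)\}$.

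The crucial ingredient is the measure estimate $\mu(\mathcal{Q}^g_n(x)) \leq e^{-n(h_\mu(f, \mathcal{Q}) - 2\delta)}$ for $\mu$-a.e.\ $x$ and large $n$. To obtain it I would combine three facts. First, Stirling's formula with $g(n,\e)/n \to 0$ yields $\#\, I(g;n,\e) = e^{o(n)}$. Second, for each fixed $\Lambda \in I(g;n,\e)$, the set $\mathcal{Q}^\Lambda(x) := \{y : \mathcal{Q}(f^i y) = \mathcal{Q}(f^i x)\ \forall\, i \in \Lambda\}$ is a union of at most $|\mathcal{Q}|^{g(n,\e)} = e^{o(n)}$ standard cylinders of $\mathcal{Q}^n$. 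Third, the conditional-entropy inequality $H_\mu(\mathcal{Q}^n \mid \mathcal{Q}_\Lambda) \leq g(n,\e)\log|\mathcal{Q}|$ (where $\mathcal{Q}_\Lambda = \bigvee_{i \in \Lambda} f^{-i}\mathcal{Q}$) together with Shannon--McMillan--Breiman yields a pointwise bound $\mu(\mathcal{Q}^\Lambda(x)) \leq e^{-n(h_\mu(f, \mathcal{Q}) - \delta)}$ for $\mu$-a.e.\ $x$, and a union bound absorbing the sub-exponential count of admissible $\Lambda$ preserves uniformity.

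The measure bound is then converted into $\liminf_n n^{-1}\log R^g_n(x, \mathcal{Q}) \geq h_\mu(f, \mathcal{Q}) - 3\delta$ by a Borel--Cantelli argument in the spirit of the classical Ornstein--Weiss return-time theorem, adapted to the $x$-dependence of $\mathcal{Q}^g_n(x)$ via enumeration of the finitely many distinct mistake cylinders at each scale $n$. Letting $\delta \to 0$ and then $\e \to 0$ delivers $\underline{h}_g(f,x) \geq h_\mu(f)$.

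\textbf{Main obstacle.} I expect the uniform measure estimate on $\mathcal{Q}^\Lambda(x)$ to be the hardest step: the naive bound by $|\mathcal{Q}|^{g(n,\e)}$ times the maximum standard cylinder measure is too crude because atypical completions at the $\Lambda^c$-positions can dominate, so one must combine conditional entropy with SMB while preserving uniformity over the sub-exponentially many admissible $\Lambda$. A secondary difficulty is the $x$-dependence of $\mathcal{Q}^g_n(x)$, which prevents direct invocation of Ornstein--Weiss and forces a carefully adapted Borel--Cantelli step.
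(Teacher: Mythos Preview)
Your upper bound matches the paper's exactly.

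For the lower bound, however, there is a genuine gap and the paper proceeds quite differently.

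\textbf{The reduction to partitions is incorrect as stated.} You ask for a finite partition $\mathcal{Q}$ such that ``every $\e_0$-ball is contained in a single $\mathcal{Q}$-atom.'' No nontrivial finite partition of a compact metric space has this property for any $\e_0>0$: a point on $\partial\mathcal{Q}$ has every ball around it meeting at least two atoms. (And if $\diam\mathcal{Q}<\e_0$ the containment fails for the opposite reason.) This is not a cosmetic slip; the whole reduction $B_n(g;x,\e)\subset \mathcal{Q}^g_n(x)$ hinges on it. The paper handles precisely this obstruction: it chooses $\mathcal{Q}$ with $\mu(\partial\mathcal{Q})=0$, lets $V_\e$ be the $\e$-neighborhood of $\partial\mathcal{Q}$, and uses Birkhoff's theorem to ensure that for typical $x$ the orbit visits $V_\e$ only on a $\gamma$-fraction of times. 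Then (Lemma~\ref{yl32}) the mistake ball $B_n(g;x,\e)$ is covered by at most $e^{\alpha n}$ cylinders of $\mathcal{Q}^{(n)}$, with the extra $\gamma n$ ``boundary visits'' absorbed into the same combinatorial count as the $g(n,\e)$ mistakes.

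\textbf{The Borel--Cantelli step is where the real difficulty hides.} Even granting a bound $\mu(\mathcal{Q}^g_n(x))\le e^{-(h-\delta)n}$, small measure does \emph{not} by itself force long first return: a set of arbitrarily small measure can have minimal return time $1$. The Ornstein--Weiss theorem is not a ``measure estimate $+$ Borel--Cantelli'' statement, and the phrase ``adapted to the $x$-dependence of $\mathcal{Q}^g_n(x)$ via enumeration of the finitely many distinct mistake cylinders'' does not indicate a mechanism that would work. Concretely, you would need to control $\mu\{x:f^j x\in\mathcal{Q}^g_n(x)\}$ summed over $j\le e^{(h-2\delta)n}$; decomposing over cylinders gives terms $\mu(D\cap f^{-j}\mathcal{Q}^g_n(D))$ with no reason to be small. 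The paper avoids this entirely: it argues by contradiction, assuming $\underline{h}_g(f)<a<b<h_\mu(f,\mathcal{Q})$, and uses a recurrence-block decomposition of orbits (following \cite{var}) to show that a set of measure $\ge 1/2$ can be covered by $Ce^{bk}$ elements of $\mathcal{Q}^{(k)}$, contradicting Katok's entropy formula. The covering count, not a measure-of-target estimate, is what carries the argument.

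So your outline correctly locates the upper bound and correctly senses that the lower bound needs a reduction to partitions plus a combinatorial control of mistakes, but both the reduction and the return-time step need the machinery the paper supplies: the boundary-visit control of Lemma~\ref{yl32} and the block-covering argument against $N_\mu(k,\mathcal{Q},1/2)$.
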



Let us comment on the assumption of ergodicity in the above
theorem. Given any $\mu\in\mathcal{M}(X,f)$ by ergodic decomposition
theorem we know that $\mu$ can be decomposed as a convex
combination of ergodic measures, i.e. $\mu=\int \mu_x
\mathrm{d}\mu(x)$. Moreover, since $h_{\mu}(f)=\int
h_{\mu_x}(f)\mathrm{d}\mu(x)$, then applying Theorem A to each
ergodic component $\mu_x$ and integrating with respect to $\mu$ we
obtain the following consequence.

\begin{maincorollary}
Let $\mu\in\mathcal{M}(X,f)$.
Then the limits $\overline{h}_g(f,x)$ and $\underline{h}_g(f,x)$
defined above do exist for $\mu$-almost every $x$ . Moreover, the
measure-theoretic entropy satisfies \[ h_{\mu}(f)=\int
\overline{h}_g(f,x) \mathrm{d}\mu(x)=\int \underline{h}_g(f,x)
\mathrm{d}\mu(x).\]
\end{maincorollary}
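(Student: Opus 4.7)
The corollary is essentially a routine consequence of Theorem A combined with the ergodic decomposition, as suggested in the paragraph preceding the statement. The plan is to identify the limits $\overline{h}_g(f,x)$ and $\underline{h}_g(f,x)$ with the entropy $h_{\mu_x}(f)$ of the ergodic component at $x$ for $\mu$-almost every $x$, and then to integrate.

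First I would apply the ergodic decomposition theorem to write $\mu=\int \mu_x\,\mathrm{d}\mu(x)$ with $\mu_x\in \mathcal{E}(X,f)$ for $\mu$-a.e.\ $x$, together with the affine identity (Jacobs' theorem) $h_{\mu}(f)=\int h_{\mu_x}(f)\,\mathrm{d}\mu(x)$. For each $\nu\in \mathcal{E}(X,f)$, Theorem A supplies a measurable set $F_\nu\subset X$ with $\nu(F_\nu)=1$ on which both limits exist and equal $h_\nu(f)$. Setting
\[
F=\{x\in X:\overline{h}_g(f,x)=\underline{h}_g(f,x)=h_{\mu_x}(f)\},
\]
the task reduces to showing $\mu(F)=1$.

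The key observation is that for $\mu$-a.e.\ $x$ the measure $\mu_x$ is ergodic and the assignment $y\mapsto \mu_y$ satisfies $\mu_y=\mu_x$ for $\mu_x$-a.e.\ $y$. Consequently $F_{\mu_x}\subset F$ up to a $\mu_x$-null set, and Theorem A applied to $\mu_x$ gives $\mu_x(F)=1$ for $\mu$-a.e.\ $x$. Integrating over $x$ yields $\mu(F)=\int \mu_x(F)\,\mathrm{d}\mu(x)=1$, so the limits exist $\mu$-almost surely. Finally, integrating the pointwise identity $\overline{h}_g(f,x)=\underline{h}_g(f,x)=h_{\mu_x}(f)$ on $F$ against $\mu$ and invoking Jacobs' formula produces the required equalities $h_\mu(f)=\int \overline{h}_g(f,x)\,\mathrm{d}\mu(x)=\int \underline{h}_g(f,x)\,\mathrm{d}\mu(x)$.

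There is no real obstacle here beyond bookkeeping: the only mildly technical point is the joint measurability of $F$ and of the two integrands, which follows from the weak-$*$ measurability of $x\mapsto \mu_x$ and the universal measurability of $\nu\mapsto h_\nu(f)$, both of which are standard facts in the theory of the ergodic decomposition.
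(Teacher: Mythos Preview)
Your proposal is correct and follows exactly the approach indicated in the paper: the paragraph preceding the corollary is the paper's entire argument, namely ergodic decomposition $\mu=\int\mu_x\,\mathrm{d}\mu(x)$, application of Theorem~A to each ergodic component, and integration via Jacobs' formula $h_\mu(f)=\int h_{\mu_x}(f)\,\mathrm{d}\mu(x)$. You have simply spelled out in more detail the bookkeeping (the set $F$, the fact that $\mu_y=\mu_x$ for $\mu_x$-a.e.\ $y$, and the measurability issues) that the paper leaves implicit.
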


Given a continuous observable $\phi:X\rightarrow\mathbb{R}$, the
measure-theoretic pressure $P_{\mu}(f,\phi)=h_{\mu}(f)+\int \phi\;
\mathrm{d}\mu$ of the invariant measure $\mu$ with respect to $f$
and $\phi$ can also be written using weighted recurrence times.
We refer the reader to \cite{zc} for more details on the measure-theoretic pressure for
a large class of potentials.

\begin{maincorollary}
Let $(X,f)$ be a TDS, $\mu\in\mathcal{E}(X,f)$, $\phi:X\rightarrow\mathbb{R}$ be a continuous potential.
Then, for every mistake function $g$ it holds
$$
P_{\mu}(f,\phi)
    =\lim_{\e\rightarrow 0}\limsup_{n\rightarrow  \infty}\frac{1}{n}\log [e^{S_n\phi(B_n(g;x,\e))} R_n(g;x,\e)]
 $$
for $\mu$-almost every $x\in X$, where $S_n\phi(B_n(g;x,\e))=\sup \{ \sum_{i=0}^{n-1}\phi(f^iy):y\in B_n(g;x,\e)\}$.
 \end{maincorollary}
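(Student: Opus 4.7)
The plan is to reduce the statement to Theorem A plus Birkhoff's ergodic theorem by decomposing the logarithm as
\[
\frac{1}{n}\log\bigl[e^{S_n\phi(B_n(g;x,\e))} R_n(g;x,\e)\bigr] = \frac{1}{n}S_n\phi(B_n(g;x,\e)) + \frac{1}{n}\log R_n(g;x,\e).
\]
The second summand is handled directly by Theorem~A: for $\mu$-a.e.\ $x$ one has $\lim_{\e\to 0}\limsup_n \tfrac{1}{n}\log R_n(g;x,\e) = h_\mu(f)$. The bulk of the work is to show that the first summand tends to $\int\phi\,\mathrm d\mu$ in the double limit, and then to glue the two pieces together.

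The main technical step is to control the sup $S_n\phi(B_n(g;x,\e))$ by $S_n\phi(x) := \sum_{i=0}^{n-1}\phi(f^i x)$. If $\omega_\phi(\e)$ denotes the modulus of continuity of $\phi$ on the compact set $X$, then for any $y\in B_n(g;x,\e)$ there exists $\Lambda \in I(g;n,\e)$ such that $d(f^i x, f^i y)<\e$ for every $i\in\Lambda$; separating the indices in $\Lambda$ from those in $[0,n-1]\setminus\Lambda$ (whose cardinality is at most $g(n,\e)$) and using $\|\phi\|_\infty<\infty$ gives
\[
\bigl|S_n\phi(y) - S_n\phi(x)\bigr| \le (n-g(n,\e))\,\omega_\phi(\e) + 2g(n,\e)\,\|\phi\|_\infty.
\]
Taking the supremum over $y\in B_n(g;x,\e)$ and dividing by $n$, and using that $g(n,\e)/n\to 0$ by definition of a mistake function, we obtain
\[
\limsup_{n\to\infty}\Bigl|\tfrac{1}{n}S_n\phi(B_n(g;x,\e)) - \tfrac{1}{n}S_n\phi(x)\Bigr|\le \omega_\phi(\e).
\]
Since $\phi$ is continuous on a compact space, $\omega_\phi(\e)\to 0$ as $\e\to 0$. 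Combined with Birkhoff's ergodic theorem applied to $\phi$ (which yields $\tfrac{1}{n}S_n\phi(x)\to \int\phi\,\mathrm d\mu$ for $\mu$-a.e.\ $x$), it follows that for $\mu$-a.e.\ $x$,
\[
\lim_{\e\to 0}\lim_{n\to\infty}\tfrac{1}{n}S_n\phi(B_n(g;x,\e)) = \int\phi\,\mathrm d\mu,
\]
where the inner limit in $n$ exists (not merely a limsup), since the Birkhoff average converges and the correction term is controlled.

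To assemble the two contributions, write $a_n=\tfrac{1}{n}S_n\phi(B_n(g;x,\e))$ and $b_n=\tfrac{1}{n}\log R_n(g;x,\e)$. Because $a_n$ has both $\limsup$ and $\liminf$ equal to $\int\phi\,\mathrm d\mu + o_\e(1)$ as $n\to\infty$, the elementary inequalities $\liminf a_n + \limsup b_n \le \limsup(a_n+b_n)\le \limsup a_n+\limsup b_n$ pin down
\[
\limsup_{n\to\infty}(a_n+b_n) = \int\phi\,\mathrm d\mu + o_\e(1) + \limsup_{n\to\infty}\tfrac{1}{n}\log R_n(g;x,\e).
\]
Letting $\e\to 0$ and invoking Theorem~A for the recurrence term gives the desired identity $\int\phi\,\mathrm d\mu + h_\mu(f)=P_\mu(f,\phi)$. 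The only subtle point is precisely this last combination of a convergent sequence with a sequence for which only the double limit exists; this is handled cleanly because the $a_n$ piece is, modulo an error that vanishes with $\e$, an honest pointwise Birkhoff limit, so it can be pulled out of the $\limsup$.
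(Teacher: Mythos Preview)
Your proof is correct and follows essentially the same route as the paper's: both split the logarithm into the Birkhoff-type term $\tfrac{1}{n}S_n\phi(B_n(g;x,\e))$ and the recurrence term $\tfrac{1}{n}\log R_n(g;x,\e)$, control the former by comparing $S_n\phi(y)$ to $S_n\phi(x)$ via uniform continuity of $\phi$ plus the bound $g(n,\e)/n\to 0$, and then invoke Birkhoff's ergodic theorem and Theorem~A. The only cosmetic difference is that the paper fixes $\delta$ first and chooses $\e_\delta$ accordingly, whereas you work directly with the modulus of continuity $\omega_\phi(\e)$; your explicit handling of the $\limsup$ of a sum with one convergent summand is, if anything, a slight expository improvement.
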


 \begin{proof}
Let $\mu\in\mathcal{E}(X,f)$ and $\phi:X\rightarrow\mathbb{R}$ be a
continuous function. Given any $\d>0$, by the uniform continuity
of $\phi$, there
 exists $\e_\d>0$ such that $|\phi(x)-\phi(y)|<\d$ whenever
 $d(x,y)<\e$ for every $0<\e<\e_\d$. For each $y\in B_n(g;x,\e)$,
 there exists $\Lambda\subset I(g;n,\e)$ so that $y\in B_\Lambda
 (x,\e)$, therefore
 \begin{eqnarray*}
\sum_{i=0}^{n-1}\phi(f^iy)&\leq&\sum_{i\in\Lambda}(\phi(f^ix)+\d)+\sum_{i\notin\Lambda}||\phi||_\infty\\
&\leq&\sum_{i=0}^{n-1}(\phi(f^ix)+\d)+C g(n,\e),
 \end{eqnarray*}
where  $C=2(||\phi||_\infty+\d)$. 
Similarly, we have $
\sum_{i=0}^{n-1}\phi(f^iy)\geq\sum_{i=0}^{n-1}(\phi(f^ix)-\d)-Cg(n,\e).
$
 Hence
 \begin{eqnarray} \label{ds21}
 \sum_{i=0}^{n-1}(\phi(f^ix)-\d)-C g(n,\e)\leq S_n\phi(B_n(g;x,\e))\leq
 \sum_{i=0}^{n-1}(\phi(f^ix)+\d)+C g(n,\e).
 \end{eqnarray}
 On the other hand, using Birkhoff's ergodic theorem and Theorem~\ref{thm.A},
 there exists a $\mu$-full measure set $\mathcal{R}$ such that
 \[
 \lim_{n\rightarrow\infty}\frac{1}{n}\sum_{i=0}^{n-1}\phi(f^ix)=\int
 \phi \, \mathrm{d}\mu~~\hbox{and}~~h_{\mu}(f)=\lim_{\e\rightarrow 0}\limsup_{n\rightarrow
 \infty}\frac{1}{n}\log R_n(g;x,\e)
 \]
for every $x\in\mathcal{R}$. Now, given $x\in \mathcal{R}$, by
(\ref{ds21}) and the definition of mistake function, we have
\[
\left|
\limsup_{n\rightarrow
 \infty}[\frac{1}{n}S_n\phi(B_n(g;x,\e))+\frac{1}{n}\log
 R_n(g;x,\e)]-P_{\mu}(f,\phi)
 \right|<2\d
\]
for every small $\e>0$. Since $\delta$ was taken arbitrary, the result follows immediately.
 \end{proof}

We turn our attention to minimal return times. Namely, given a mistake function $g$ we define the $nth~minimal~return~time$ $S_n(g;x,\e)$ to the mistake dynamical ball $B_n(g;x,\e)$ by
\[
S_n(g;x,\e)=\inf \left\{ k\geq 1: f^{-k}(B_n(g;x,\e))\cap
B_n(g;x,\e)\neq \emptyset\right\}.
\]

Now we give an alternative definition of $g$-almost specification
property of a TDS $(X,f)$, motivated by the results from
Thompson's definition of  almost specification property \cite{th}
and Pfister and Sullivan's definition of $g$-almost product
property \cite{ps}.

\begin{definition}Let $g$ be a mistake function.
A TDS $(X,f)$ satisfies the $g$-almost specification property if
there exists $\e>0$ and a positive integer $N(g,\e)$ such that for
any $x,y\in X$ and integers $n,m \geq N(g,\e)$ we have
$B_{m}(g;y,\e) \cap f^{-m}(B_{n}(g;x,\e)) \neq \emptyset$.
\end{definition}

The previous notion is weaker than the one introduced in \cite{th}
since it deals with the case that any two pieces of approximate
orbits are given to be approximated by a real orbit within the
same scale $\e$. In opposition to \cite{ps} the unboundedness of
the mistake function is not required. It is interesting to study
the class of mistake functions $g$ for which $g$-almost
specification still holds, question which we discuss partially in
Lemma~\ref{l.mistake} below.
In what follows we prove that under some mild assumptions minimal return times grow linearly.
More precisely,

\begin{maintheorem}\label{thm.B}
Let $g$ be any mistake function. If $(X,f)$ is a TDS with $g$-almost specification property and
$\mu\in \mathcal{E}(X,f)$ so that $h_{\mu}(f)>0$, then the limits
 \[
 \overline{S}(x)=\lim_{\e\rightarrow 0}\limsup_{n\rightarrow
 \infty}\frac{1}{n}S_n(g;x,\e)~~and~~\underline{S}(x)=\lim_{\e\rightarrow 0}\liminf_{n\rightarrow
 \infty}\frac{1}{n}S_n(g;x,\e)
\]
exists and are equal to one for $\mu$-almost every $x$.
\end{maintheorem}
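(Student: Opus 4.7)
My plan is to prove the upper and lower bounds for $\overline{S}(x)$ and $\underline{S}(x)$ separately. The upper bound will come directly from $g$-almost specification, while the lower bound will be obtained by contradiction, reducing the situation to an application of Theorem~\ref{thm.A} for a suitably chosen auxiliary mistake function.

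For the upper bound I will apply the $g$-almost specification property with $y=x$ and $m=n\ge N(g,\e)$: this produces a point $z\in B_n(g;x,\e)$ whose $n$-th iterate also lies in $B_n(g;x,\e)$, so $S_n(g;x,\e)\le n$ for all $n$ sufficiently large. Thus $\limsup_n S_n(g;x,\e)/n\le 1$ for every $x\in X$ and every $\e>0$, and hence $\overline{S}(x)\le 1$ everywhere.

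For the lower bound I argue by contradiction. Suppose that there exist $\alpha\in(0,1)$ and a measurable set $A$ with $\mu(A)>0$ on which $\lim_{\e\to 0}\liminf_n S_n(g;x,\e)/n\le 1-\alpha$. Since $S_n$ is non-increasing in $\e$, for every $\e>0$ and every $x\in A$ I can extract a sequence $n_j\to\infty$ with $k_j:=S_{n_j}(g;x,\e)\le(1-\alpha/2)n_j$, so $m_j:=n_j-k_j\ge(\alpha/2)n_j$. By the very definition of $S_{n_j}$ there is a witness $y_j$ such that both $y_j$ and $f^{k_j}(y_j)$ lie in $B_{n_j}(g;x,\e)$. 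Writing out the two shadowing conditions along subsets of $[0,n_j-1]$ of cardinality at least $n_j-g(n_j,\e)$, and combining them via the triangle inequality, I obtain a subset of $[0,m_j-1]$ of cardinality at least $m_j-2g(n_j,\e)$ on which $d(f^{i+k_j}(x),f^i(x))<2\e$. This is exactly the statement $f^{k_j}(x)\in B_{m_j}(\tilde g;x,2\e)$, where I define the auxiliary function $\tilde g(m,\e'):=2\,g(\lfloor 2m/\alpha\rfloor+1,\e'/2)$.

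A direct check shows that $\tilde g$ is again a mistake function (monotonicity in $m$ is inherited from $g$, and $\tilde g(m,\e')/m\to 0$ follows from $g(n,\e)/n\to 0$ because $\lfloor 2m/\alpha\rfloor/m$ stays bounded), and the choice of $m_j$ guarantees $\tilde g(m_j,2\e)\ge 2g(n_j,\e)$. Consequently $R_{m_j}(\tilde g;x,2\e)\le k_j\le(2/\alpha-1)m_j$, which grows only linearly in $m_j$, so $\liminf_{m\to\infty}\frac{1}{m}\log R_m(\tilde g;x,2\e)\le 0$ for every $x\in A$ and every $\e>0$. Letting $\e\to 0$ and invoking Theorem~\ref{thm.A} for the mistake function $\tilde g$ forces $h_{\mu}(f)\le 0$ on the positive-measure set $A$, contradicting $h_{\mu}(f)>0$. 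The main obstacle I foresee is precisely the construction and admissibility check of $\tilde g$: the bookkeeping that translates a small minimal return time $k_j$ into a genuine mistake return time at the shorter length $m_j$ must keep the combined error count $2g(n_j,\e)$ under control at the new scale, and it is here that the sublinearity of $g$ is used in an essential way; once this translation is in place, the contradiction with Theorem~\ref{thm.A} is immediate.
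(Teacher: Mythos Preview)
Your upper bound matches the paper exactly. For the lower bound, however, your route is genuinely different from the paper's, and it works.

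The paper argues the lower bound directly: it fixes $\eta\in(0,1)$, chooses a partition $\mathcal{Q}$ with $\mu(\partial\mathcal{Q})=0$ and $h=h_\mu(f,\mathcal{Q})>0$, invokes the covering Lemma~\ref{yl32} together with Shannon--McMillan--Breiman to produce a set $E_\eta$ of measure $>1-\eta$ on which mistake balls are covered by at most $e^{\alpha n}$ cylinders of controlled measure, and then shows by a cylinder-counting estimate that
\[
\mu\big(\{x\in E_\eta: S_n(g;x,\e)<\eta n\}\big)\le K\eta\, n\, e^{-(h-2\alpha)n}e^{(h+\alpha)\eta n}
\]
is summable in $n$. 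Borel--Cantelli then gives $S_n(g;x,\e)>\eta n$ eventually, and $\eta\to 1$ finishes.

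Your argument instead recycles Theorem~\ref{thm.A}: from a witness of small minimal return time $k_j\le(1-\alpha/2)n_j$ you manufacture, via the triangle inequality, an actual mistake return of $x$ itself at the shorter length $m_j=n_j-k_j$ for the auxiliary mistake function $\tilde g$, so that $R_{m_j}(\tilde g;x,2\e)\le k_j$ grows only linearly and Theorem~\ref{thm.A} forces $h_\mu(f)\le 0$. This is cleaner and more conceptual, since it isolates exactly where positive entropy is used and avoids redoing the Shannon--McMillan--Breiman/covering machinery. The paper's proof, by contrast, is self-contained and gives an explicit exponential rate for $\mu(\{S_n<\eta n\})$, which could be of independent interest.

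One small point: your claim that $S_n(g;x,\e)$ is non-increasing in $\e$ is not justified by the stated definition of a mistake function (no monotonicity of $g(n,\e)$ in $\e$ is assumed, so $B_n(g;x,\e)$ need not be monotone in $\e$). You do not actually need ``for every $\e>0$''; it suffices to have the subsequence $n_j$ for \emph{arbitrarily small} $\e$, which follows directly from $\liminf_{\e\to 0}\liminf_n S_n(g;x,\e)/n\le 1-\alpha$. With that adjustment the contradiction with Theorem~\ref{thm.A} goes through exactly as you describe. (The paper is equally informal on this monotonicity issue.)
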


Note that mistake dynamical balls coincide with the usual
dynamical balls in the case that $g\equiv 0$.  Therefore,  this
theorem improves the results by the second author in
\cite[Theorem~B]{var} since we require as an hypothesis a weaker
specification property. We provide an interesting example that
illustrates this fact.

\begin{example}\label{ex.beta}
Consider the piecewise expanding maps of the interval $[0,1)$
given by $T_\beta(x)=\beta x (\!\!\!\mod 1)$, where $\beta>1$ is
not integer. This family is known as \emph{beta transformations}
and it was introduced by R\'enyi in \cite{re}.
It was proved by Buzzi~\cite{Buz97} that for all but countable
many values of $\beta$ the transformation $T_\beta$ do not satisfy
the specification property. These do not satisfy the conditions of
 \cite[Theorem~B]{var}.
It follows from \cite{ps,th} that every $\beta$-map satisfies the almost specification property
for every unbounded mistake function $g$.
Then it follows from our results 
that given any unbounded mistake function $g$ and every invariant measure
$\mu$ with positive entropy one has
$$
\lim_{\e\to 0}\limsup_{n\to \infty}\frac{1}{n}S_n(g;x,\e)
    =\lim_{\e\to 0}\liminf_{n\to  \infty}\frac{1}{n}S_n(g;x,\e)
    =1
$$
for $\mu$-almost every $x$.
\end{example}

\begin{remark}
It is not hard to use the previous results to obtain formulas relating entropy and return
times to partition elements instead of dynamical balls. In fact, let $\mu$ be an
$f$-invariant ergodic measure. If
$\mathcal Q$ is a partition of $X$ and $g=g(n,\mathcal Q)$ is any
function such that $g(n,\mathcal Q)\leq g(n+1,\mathcal Q)$ and
$\lim_{n\to\infty} g(n,\mathcal Q)/n=0$, also denoted by
mistake function, we can consider the mistake partition elements
$$
\mathcal Q_g^{(n)}=\bigcup_{\Lambda\in I(g;n,\mathcal Q)}\mathcal
Q^{(n)}_{\Lambda}
$$
where $I(g;n,\mathcal Q)=\{ \Lambda\subset
[0,n-1]\cap\mathbb{N}\mid \# \Lambda \geq n-g(n,\mathcal Q)\}$ and
$\mathcal Q^{(n)}_{\Lambda}=\bigvee_{j\in
\Lambda}f^{-j}\mathcal{Q}$.
One may endow the space $X$ with the pseudo-distance $d(x,y)=e^{-N}$, where
$N=\inf\{k\ge 1:  f^j(y)\in \mathcal Q(f^j(x)) \text{ for every }
1\le j \le k \}$, in which case the mistake dynamical ball
$B_n(g;x,\e)$ coincides with the union of partition elements
$\mathcal Q_g^{(n)}(x)$. We derive from Theorems~\ref{thm.A} and \ref{thm.B} that
\begin{equation}\label{eq.ent.partitions}
h_{\mu}(f,\mathcal Q)=\lim_{n\to\infty} \frac1n\log R_n(g,x, \mathcal Q)
    \quad\text{ for $\mu$-a.e $x$}
\end{equation}
and, if $f$ satisfies the $g$-almost specification property and $\mu$ has positive entropy then
\begin{equation}\label{eq.linear.partitions}
\lim_{n\to\infty} \frac{S_n(g,x, \mathcal Q)}n=1
\end{equation}
where $R_n(g,x,\mathcal Q)$ and $S_n(g,x,\mathcal Q)$ denote,
respectively, the first and the minimal return times of the point
$x$ to the set $\mathcal Q_g^{(n)}(x)$.
\end{remark}

We now obtain that recurrence is strongly related with topological
pressure. More precisely, despite the fact that we deal with
recurrence to mistake dynamical balls the first claim of the next
result can be understood as an extension of \cite{msuz} .

\begin{maintheorem}\label{thm.C}
Let $(X,f)$ be a subshift of finite type, $\phi: X\to\mathbb R$
be  a H\"older continuous potential and $\mu=\mu_\phi$ be the unique
equilibrium state of $f$ and $\phi$. Let us denote by $\cQ$ the partition of
$X$ into initial cylinders of length one. Then for every mistake
function $g$, it holds
$$
\lim_{n\to\infty} \frac1n \log \left[\sum_{j=0}^{R_n(g;x,\cQ)} e^{S_n\phi (f^j(x))}\right]
    =h_\mu(f) + P_{\text{top}}(f,2\phi) -P_{\text{top}}(f,\phi),
$$
moreover, if  $h_{\mu}(f)>0$, then
$$
\lim_{n\to\infty} \frac1n \log \left[\sum_{j=0}^{S_n(g;x,\cQ)} e^{S_n\phi (f^j(x))}\right]
    =P_{\text{top}}(f,2\phi) -P_{\text{top}}(f,\phi),
$$
for $\mu$-a.e. $x$, where $P_{\text{top}}$ stands for the topological pressure of $f$
with respect to $\phi$.
 \end{maintheorem}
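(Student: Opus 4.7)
The plan is to reduce the sum to a partition--function computation via the Gibbs property of $\mu=\mu_\phi$, and then combine this with the estimates on $R_n(g;x,\cQ)$ and $S_n(g;x,\cQ)$ coming from Theorems~\ref{thm.A} and \ref{thm.B} (in their partition-version forms \eqref{eq.ent.partitions} and \eqref{eq.linear.partitions}). Since $\phi$ is H\"older continuous and $f$ is a subshift of finite type, $\mu_\phi$ is Gibbs: there exists $K>0$ such that for every $y\in X$ and every $n\ge 1$
\begin{equation*}
K^{-1}\,e^{S_n\phi(y)-nP_{\text{top}}(f,\phi)}\;\le\;\mu_\phi(\cQ^{(n)}(y))\;\le\; K\,e^{S_n\phi(y)-nP_{\text{top}}(f,\phi)},
\end{equation*}
and the analogous estimate holds for $\mu_{2\phi}$ with $2\phi$ and $P_{\text{top}}(f,2\phi)$ in place of $\phi$ and $P_{\text{top}}(f,\phi)$. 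Using the first Gibbs inequality I would rewrite
\begin{equation*}
\sum_{j=0}^{T} e^{S_n\phi(f^j x)}\;\asymp\; e^{nP_{\text{top}}(f,\phi)}\sum_{j=0}^{T}\mu_\phi(\cQ^{(n)}(f^j x)),
\end{equation*}
where $T=R_n(g;x,\cQ)$ for the first identity and $T=S_n(g;x,\cQ)$ for the second.

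Next, I would evaluate $\Sigma_T(x):=\sum_{j=0}^{T}\mu_\phi(\cQ^{(n)}(f^j x))$ by grouping the summands according to the cylinder visited at time $j$: setting $N_{\cQ'}(x):=\#\{0\le j\le T:f^j x\in\cQ'\}$, one has $\Sigma_T(x)=\sum_{\cQ'\in\cQ^{(n)}}N_{\cQ'}(x)\,\mu_\phi(\cQ')$. Birkhoff's theorem applied to the indicator of each $\cQ'$ suggests the quantitative approximation $N_{\cQ'}(x)\approx T\mu_\phi(\cQ')$, and hence
\begin{equation*}
\Sigma_T(x)\;\asymp\; T\sum_{\cQ'\in\cQ^{(n)}}\mu_\phi(\cQ')^2.
\end{equation*}
A second use of the Gibbs bound, combined with the partition-function characterization of pressure $Z_n(2\phi):=\sum_{\cQ'\in\cQ^{(n)}}\sup_{y\in\cQ'}e^{2S_n\phi(y)}\asymp e^{nP_{\text{top}}(f,2\phi)}$, yields $\sum_{\cQ'\in\cQ^{(n)}}\mu_\phi(\cQ')^2\asymp e^{n(P_{\text{top}}(f,2\phi)-2P_{\text{top}}(f,\phi))}$, so that
\begin{equation*}
\sum_{j=0}^{T} e^{S_n\phi(f^j x)}\;\asymp\; T\cdot e^{n(P_{\text{top}}(f,2\phi)-P_{\text{top}}(f,\phi))}.
\end{equation*}
Taking logarithms, dividing by $n$ and passing to the limit then closes both cases: for $T=R_n(g;x,\cQ)$ equation \eqref{eq.ent.partitions} gives $\tfrac{1}{n}\log T\to h_\mu(f)$, producing the first identity, while for $T=S_n(g;x,\cQ)$ equation \eqref{eq.linear.partitions} gives $\tfrac{1}{n}\log T\to 0$, producing the second.

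The main obstacle is to make the quantitative ergodic step $\Sigma_T(x)\asymp T\sum_{\cQ'}\mu_\phi(\cQ')^2$ rigorous at the exponential scale, given that both $T=T(n)$ and the collection $\cQ^{(n)}$ depend on $n$. For $T=R_n\asymp e^{nh_\mu(f)}$ this is fairly comfortable: the time window is long enough to sample the Gibbs-heavy cylinders (with $\mu_\phi(\cQ')\gtrsim e^{-nh_\mu(f)}$) which carry most of $\sum_{\cQ'}\mu_\phi(\cQ')^2$, and an effective form of Birkhoff's theorem (or Shannon--McMillan--Breiman combined with a Borel--Cantelli argument) suffices. For $T=S_n\asymp n$ the window is only linear in $n$, far shorter than the inverse of typical cylinder measures, and the approximation becomes delicate: one must leverage the $g$-almost specification property already underlying Theorem~\ref{thm.B} to guarantee that the orbit of a generic $x$ enters within a linear window the Gibbs-heavy cylinders that dominate the sum of squares, in the spirit of the estimates of \cite{msuz}.
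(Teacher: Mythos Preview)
Your central step---the approximation $\Sigma_T(x)\asymp T\sum_{\cQ'}\mu_\phi(\cQ')^2$ at the exponential scale---is not a reduction but a restatement: once you unwind the Gibbs inequality it is \emph{equivalent} to $\sum_{j=0}^T e^{S_n\phi(f^jx)}\asymp T\,e^{nc_{\phi,1}}$, which is the theorem itself. The paper avoids this circularity by treating the two inequalities with different tools. For the upper bound it uses $f$-invariance to get $\int\sum_{j=0}^{a_i(n)}e^{S_n\phi\circ f^j}\,d\mu=(a_i(n)+1)\int e^{S_n\phi}\,d\mu\asymp a_i(n)\,e^{nc_{\phi,1}}$, and then Chebyshev plus Borel--Cantelli transfers this integrated bound to $\mu$-a.e.\ $x$. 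For the lower bound it counts the times $j\le a_i(n)$ at which $S_n\phi(f^jx)>n\int\phi\,d\mu+n\delta$, estimates this count by $\tfrac12 a_i(n)\mu(B_n(\delta))$ via ergodicity, and recovers $c_{\phi,1}$ as $\sup_\delta\{\delta-\hat I(\delta)\}+h_\mu(f)$ through Legendre duality with the large-deviation rate function, following \cite{msuz}. Neither half goes through your cylinder-by-cylinder Birkhoff count.

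For the second identity your suspicion that the linear window is ``delicate'' understates the problem: your approximation is actually false there. For $\mu$-a.e.\ $x$ one has $S_n\phi(f^jx)=S_{n+j}\phi(x)-S_j\phi(x)=n\int\phi\,d\mu+o(n)$ \emph{uniformly} in $0\le j\le(1+\delta)n$, simply from Birkhoff applied to $x$. Hence every term of $\Sigma_{S_n}(x)$ equals $e^{-nh_\mu(f)+o(n)}$ and $\Sigma_{S_n}(x)=n\,e^{-nh_\mu(f)+o(n)}$, whereas your target $S_n\sum_{\cQ'}\mu_\phi(\cQ')^2\asymp n\,e^{n(P_{\text{top}}(f,2\phi)-2P_{\text{top}}(f,\phi))}$ is exponentially larger whenever $\phi$ is not cohomologous to a constant. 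A window of length $\sim n$ is too short for the orbit to visit the atypical cylinders that dominate $\sum_{\cQ'}\mu_\phi(\cQ')^2$, and $g$-almost specification cannot manufacture such visits for a \emph{generic} point. (The paper's own ergodic count $\tfrac12 a_2(n)\mu(B_n(\delta))\sim n\,e^{-n\hat I(\delta)}\ll 1$ hits the same obstruction here, so the second identity as stated appears to require $\mu_\phi=\mu_{2\phi}$.)
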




An important remark is that in \cite{mv} the authors established a
similar formula for expansive dynamical systems with the
specification property, which contains the case of subshifts of
finite type. Although we will not prove it here it seems possible that
the theorem above may admit such a generalization for expansive
dynamical systems with the $g$-almost specification property.

Our last result will concern return times for suspension semiflows $(f^t)_t$
over a base dynamical system $\si:\Sigma \to \Sigma$ with continuous height
function $\varphi:\Si \to \R^{+}$ on a compact metric space $\Sigma$.
More precisely, $(f^t)_t$ acts on the space $Y=\{(x,s)
\in \Si \times \R^{+} : 0 \leq s \leq \varphi(x)\},$ where
$(x,\varphi(x))$ and $(\si(x),0)$ are identified for every $x\in\Sigma$, as the ``vertical flow"
defined by $f^t(x,s)=(x,s+t)$. With the natural identification on $Y$ we can write
$$
f^t(x,s)=\left( \si^k(x),t+s- \sum_{i=0}^k\varphi(f^i(x)) \right)
$$
whenever $\sum_{i=0}^k\varphi(f^i(x))\leq t+s \leq
\sum_{i=0}^{k+1}\varphi(f^i(x))$. It is well known that if the
roof function is bounded from away from zero and infinity then
there is a natural identification between the space $\cM$ of
$(f^t)_t$-invariant probability measures and the space $\cM_\si$
of $\si$-invariant probability measures. Namely,
\begin{equation}\label{eq.bijection}
\begin{array}{cccc}
L: & \cM_\si &    \to  &  \cM\\
   &     \mu       & \mapsto & \overline\mu=\frac{(\mu \times m)|_Y}{(\mu \times m)(Y)}
\end{array}
\end{equation}
is a bijection, where $m$ is the Lebesgue measure on $\mathbb{R}$.
In particular many ergodic properties for suspension semiflows can
be reduced to the study of the Poincar\'e return map corresponding
to the section $\Si\times\{0\}$. The following is an extension of
the results by Chazottes in \cite{ch}. First recall that the
first return time for flows have the subtlety that the return time is considered after the escaping time (for results on return times for flow we can refer to the thesis of the first autor \cite{thesej}).
Indeed, given an open set $A\subset Y$ and $(x,t)\in A$ define
the \emph{escaping time} of a point
$$
e_A((x,t))=\inf\left\{s>0 : f^s(x,t) \notin A\right\},
$$
the escaping time of a set
$$
e(A)=\inf\left\{s>0 : f^s(A) \cap A=\emptyset\right\}
$$
and the \emph{minimal return time} $\tau_f(A)$ as
$$
\tau_f(A)=\inf\left\{s>e(A) : f^{-s}(A)\cap A\neq \emptyset \right\}.
$$
We shall consider mainly sets of the form $A=B_n(g;x,\e)\times
[t-\e,t+\e]$ with respect to a mistake function $g$.

\begin{maintheorem} \label{thm.flows}
Let $g_1$ and $g_2$ be any mistake functions on $\Sigma$ and let $\mu$ be an ergodic $f$-invariant
probability measure. Assume that $f$ satisfies the $g_2$-almost specification property. If $\mu$ is an
$f$-invariant, ergodic probability measure with positive entropy then for $\mu$-almost
every $x\in\Sigma$ and every $s\in\R$ such that $(x,s)\in Y$, we have
\begin{eqnarray*}
h_{\overline\mu}((f^t))
    &=&\lim_{\e\to 0}\limsup_{n\to\infty}\frac{\log R_n(g_1;x,\eps)}{\tau_f(B_n(g_2;x,\e)\times(s-\eps,s+\eps))}\\
    &=&\lim_{\e\to 0}\liminf_{n\to\infty}\frac{\log R_n(g_1;x,\eps)}{\tau_f(B_n(g_2;x,\e)\times(s-\eps,s+\eps))}.
\end{eqnarray*}
where $\overline \mu$ is the $(f^t)$-invariant measure given by
\eqref{eq.bijection} and $R_n(g_1;x,\eps)$ stands for the first
return time of the point $x$ to the set $B_n(g_1;x,\e)$ by the
base transformation $\sigma$.
\end{maintheorem}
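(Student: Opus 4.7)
The strategy is to reduce the flow asymptotic to the base dynamics via Abramov's formula, which asserts that
$$
h_{\overline\mu}((f^t))=\frac{h_\mu(\si)}{\int\varphi\,d\mu},
$$
and then to combine this with Theorems~\ref{thm.A} and \ref{thm.B} applied to $(\Si,\si)$. Writing
$$
\frac{\log R_n(g_1;x,\e)}{\tau_f(B_n(g_2;x,\e)\times(s-\e,s+\e))}=\frac{\tfrac{1}{n}\log R_n(g_1;x,\e)}{\tfrac{1}{n}\tau_f(B_n(g_2;x,\e)\times(s-\e,s+\e))},
$$
Theorem~\ref{thm.A} already yields $\tfrac1n\log R_n(g_1;x,\e)\to h_\mu(\si)$ for $\mu$-a.e.\ $x$ as $n\to\infty$ and $\e\to 0$. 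So the core of the proof reduces to establishing
$$
\lim_{\e\to 0}\lim_{n\to\infty}\frac{\tau_f(B_n(g_2;x,\e)\times(s-\e,s+\e))}{n}=\int\varphi\,d\mu,
$$
both as $\limsup$ and $\liminf$, for $\mu$-a.e.\ $x$.

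For the upper bound on $\tau_f(A)$ with $A=B_n(g_2;x,\e)\times(s-\e,s+\e)$, set $S_n:=S_n(g_2;x,\e)$. By definition of $S_n$ there exists $y\in B_n(g_2;x,\e)$ with $\sigma^{S_n}(y)\in B_n(g_2;x,\e)$. For $\e$ small the continuity of $\varphi$ at $x$ (together with the fiber identification if $s=0$ or $s=\varphi(x)$) guarantees $s<\varphi(y)$, so $(y,s)\in A$ and
$$
f^r(y,s)=(\sigma^{S_n}y,s)\in A,\qquad r:=\sum_{i=0}^{S_n-1}\varphi(\sigma^i y).
$$
Since $e(A)$ is bounded by a constant depending only on $\|\varphi\|_\infty$ and $s$ while $r$ grows linearly in $n$, eventually $r>e(A)$, yielding $\tau_f(A)\le r$.

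For the lower bound, any return $f^r(y,u)\in A$ with $(y,u)\in A$ takes the form $f^r(y,u)=(\sigma^k y,u')$ with $\sigma^k y\in B_n(g_2;x,\e)$, $u'\in(s-\e,s+\e)$; since $y\in B_n(g_2;x,\e)$ and $k\ge 1$, the minimality of $S_n$ forces $k\ge S_n$ and
$$
r=\sum_{i=0}^{k-1}\varphi(\sigma^i y)+(u'-u)\ge\sum_{i=0}^{S_n-1}\varphi(\sigma^i y)-2\e.
$$
To replace Birkhoff sums along $y$ with those along $x$, fix $\delta>0$ and shrink $\e$ so that $|\varphi(z)-\varphi(z')|<\delta$ whenever $d(z,z')<\e$. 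Since $y\in B_n(g_2;x,\e)$, at least $n-g_2(n,\e)$ indices $i$ satisfy $d(\sigma^i y,\sigma^i x)<\e$, hence
$$
\Bigl|\sum_{i=0}^{S_n-1}\varphi(\sigma^i y)-\sum_{i=0}^{S_n-1}\varphi(\sigma^i x)\Bigr|\le\delta\,S_n+2\|\varphi\|_\infty\,g_2(n,\e).
$$
Theorem~\ref{thm.B} provides $S_n/n\to 1$, Birkhoff's ergodic theorem gives $\tfrac{1}{n}\sum_{i=0}^{S_n-1}\varphi(\sigma^i x)\to\int\varphi\,d\mu$ for $\mu$-a.e.\ $x$, and $g_2(n,\e)/n\to 0$ by the definition of a mistake function. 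Combining upper and lower bounds we conclude $\tau_f(A)/n\to\int\varphi\,d\mu$ as first $n\to\infty$ and then $\e\to 0$.

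Plugging these two asymptotics into the original ratio and invoking Abramov's formula delivers the claimed equalities between the $\limsup$, $\liminf$, and $h_{\overline\mu}((f^t))$. The main technical point, and the step I expect to require the most care, is the simultaneous control of the Birkhoff sum $\sum_{i=0}^{S_n-1}\varphi(\sigma^i y)$ for $y\in B_n(g_2;x,\e)$—whose proximity to $x$ is only guaranteed on a subset of indices with sublinear defect $g_2(n,\e)$—together with the bookkeeping of the transient $|u'-u|\le 2\e$ in the fiber direction and the verification that $e(A)$ is negligible compared to $n$. Once these uniform estimates are in place, the argument is a clean reduction from the suspension flow to the base through Abramov's identity and the recurrence theorems established earlier.
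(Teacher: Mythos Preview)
Your proposal is correct and follows essentially the same route as the paper: both split the ratio by $1/n$, invoke Theorem~\ref{thm.A} for the numerator, reduce the denominator $\tau_f(B_n(g_2;x,\e)\times(s-\e,s+\e))/n$ to Birkhoff sums of $\varphi$ over $S_n(g_2;x,\e)$ steps, transfer these sums from $y\in B_n(g_2;x,\e)$ to $x$ using the mistake-ball structure (with error $\alpha(\e)\cdot\#\Lambda_n + 2\|\varphi\|_\infty\, g_2(n,\e)$), apply Theorem~\ref{thm.B} together with Birkhoff's theorem, and conclude via Abramov's formula. The paper packages the estimates slightly differently---working on an Egorov-type set $\tilde\Sigma$ with uniform bounds \eqref{eqbirkdelta}--\eqref{eqsndelta} and replacing $S_n$ by $\lfloor n(1-\delta)\rfloor$, $\lceil n(1+\delta)\rceil$---but the substance is identical.
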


Let us mention that an adapted version of the previous result also holds for non-ergodic measures.
In fact, if $\overline\mu=L(\mu)$ is any $(f^t)$-invariant probability measure and $\mu=\int \mu_x \;d\mu(x)$
is an ergodic decomposition for $\mu$ then it is clear that $\overline\mu=\int L(\mu_x) \;d\mu(x)$ is an
ergodic decomposition of $\overline\mu$. Hence, we can use Theorem~\ref{thm.flows} above in the
formula $h_{\overline\mu}((f^t))=\int h_{L(\mu_x)}((f^t))\;d\mu(x)$.

\begin{example} For every topological Axiom A flow, that is, suspension semiflows
over subshifts of finite type, it is not hard to check in the
proof of Theorem~\ref{thm.flows} that a particular easy
application of Theorem~\ref{thm.C} yields
$$
\lim_{n\to\infty}\frac{\log \sum_{j=0}^{R_n(g_1;x,\cQ)} e^{S_n\phi(f^j(x))} }{\tau_f(B_n(g_2;x,\e)\times(s-\eps,s+\eps))}
    = h_{\overline\mu}((f^t)) + \frac{c_{\phi,1}}{\int \phi \,d\mu}
$$
where $c_{\phi,1}=P_{\text{top}}(f,2\phi)-P_{\text{top}}(f,\phi)$
is the free energy defined in (\ref{freeenergy32}).
 However, since we found no
particularly simple expression for the last term in the right
hand-side.  We shall not prove nor use this fact along the paper.
\end{example}

We give now an example of application of Theorem~\ref{thm.flows} to the suspension flow of
$\beta$-transformations.

\begin{example} \label{ex.beta.flow}
Take the $\beta$-transformation given by $T_\beta(x)=\beta x (\!\!\!\mod 1)$ in the interval $[0,1)$
with $\beta>1$ not integer as discussed in Example~\ref{ex.beta} and consider any unbounded
mistake function $g$ and any mistake function $\tilde g$. Let $\mu$ be any ergodic $T_\beta$-invariant
probability measure with positive entropy, $(f_t)_t$ be the suspension semiflow by a continuous roof function $\varphi$ bounded away from zero. Then $\overline{\mu}=(\mu\times Leb)/\int \varphi\, d\mu$ is a $(f_t)_t$-invariant ergodic probability measure satisfying
\begin{eqnarray*}
h_{\overline\mu}((f^t))
    &=&\lim_{\e\to 0}\limsup_{n\to\infty}\frac{\log R_n(\tilde g;x,\eps)}{\tau_f(B_n(g;x,\e)\times(s-\eps,s+\eps))}\\
    &=&\lim_{\e\to 0}\liminf_{n\to\infty}\frac{\log R_n(\tilde g ;x,\eps)}{\tau_f(B_n( g;x,\e)\times(s-\eps,s+\eps))}.
\end{eqnarray*}
for $\mu$-almost every $x$.
\end{example}

Further applications of our results exploring the relation between pointwise dimension, Lyapunov exponents
and entropy of invariant measures as in \cite{var} seems feasible and so we believe in an affirmative
answer to the following question. \vspace{.5cm}

\noindent {\bf Question:} Can one compute the pointwise dimension of an invariant measure
using recurrence to mistake dynamical balls?

\section{Preliminaries}
\setcounter{section}{3} \setcounter{equation}{0} In this section,
we recall some preliminary results about   entropy, free energy,
mistake function and suspension semiflows.
\subsection{Entropy}

 In this subsection, we first recall an equivalent description of the
 measure-theoretic entropy. Namely, using Katok's entropy
 formula \cite{kat1} and Shannon-McMillan-Breiman's theorem, we have the following lemma.

 \begin{lemma}Let $\mathcal{Q}$ be a partition of $X$, $c\in (0,1)$ and $\mu\in\mathcal{E}(X,f)$.   Then
 \begin{eqnarray}
 h_{\mu}(f,\mathcal{Q})=\limsup_{n\rightarrow
 \infty}\frac{1}{n}\log N_{\mu}(n,\e,c)
\end{eqnarray}
where $N_{\mu}(n,\e,c)$ denotes the minimum number of
$n$-cylinders of the partition
$\mathcal{Q}^{(n)}=\bigvee_{i=0}^{n-1}f^{-i}\mathcal{Q}$ necessary
to cover a set of $\mu$-measure at least  $c$.
\end{lemma}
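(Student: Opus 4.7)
The plan is to derive the identity from Shannon--McMillan--Breiman's theorem applied to the partition $\mathcal{Q}^{(n)}$, which for $\mu\in\mathcal{E}(X,f)$ ensures that
$$
-\frac{1}{n}\log\mu(\mathcal{Q}^{(n)}(x))\xrightarrow[n\to\infty]{} h_\mu(f,\mathcal{Q})\qquad\text{for $\mu$-a.e.\ }x\in X.
$$
Write $h=h_\mu(f,\mathcal{Q})$ and, given $\delta>0$, introduce the two sets
$$
G_n^+ =\{x\in X : \mu(\mathcal{Q}^{(n)}(x))\ge e^{-n(h+\delta)}\},\qquad G_n^- =\{x\in X : \mu(\mathcal{Q}^{(n)}(x))\le e^{-n(h-\delta)}\}.
$$
Both are unions of elements of $\mathcal{Q}^{(n)}$ (since $x\mapsto \mu(\mathcal{Q}^{(n)}(x))$ is constant on each $n$-cylinder), and by SMB together with Egorov both have $\mu$-measure converging to $1$ as $n\to\infty$. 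The two bounds on $N_\mu(n,\varepsilon,c)$ will be obtained by covering a set of $\mu$-measure $\ge c$ contained in $G_n^+$, respectively by showing that any such cover intersects $G_n^-$ in a set of comparable measure.

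For the upper bound, fix $n$ large enough that $\mu(G_n^+)\ge c$. Then $G_n^+$ itself is a union of cylinders of $\mathcal{Q}^{(n)}$, each having $\mu$-measure at least $e^{-n(h+\delta)}$, so it is the union of at most $e^{n(h+\delta)}$ cylinders. This gives a cover of a set of $\mu$-measure at least $c$ by at most $e^{n(h+\delta)}$ $n$-cylinders, whence $N_\mu(n,\varepsilon,c)\le e^{n(h+\delta)}$. Passing to $\limsup$ and letting $\delta\to 0$ yields $\limsup_n \tfrac{1}{n}\log N_\mu(n,\varepsilon,c)\le h$.

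For the lower bound, let $\mathcal{F}_n\subset \mathcal{Q}^{(n)}$ be a minimal collection whose union $E_n$ has $\mu(E_n)\ge c$, and choose $n$ large enough that $\mu(G_n^-)\ge 1-c/2$, so that $\mu(E_n\cap G_n^-)\ge c/2$. Every cylinder $Q\in\mathcal{F}_n$ that meets $G_n^-$ is entirely contained in $G_n^-$ and has $\mu(Q)\le e^{-n(h-\delta)}$, therefore
$$
\tfrac{c}{2}\le \mu(E_n\cap G_n^-)\le \#\mathcal{F}_n\cdot e^{-n(h-\delta)},
$$
which gives $N_\mu(n,\varepsilon,c)\ge (c/2)\,e^{n(h-\delta)}$ and hence $\liminf_n \tfrac{1}{n}\log N_\mu(n,\varepsilon,c)\ge h-\delta$. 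Letting $\delta\to 0$ concludes the lower bound, and in fact proves that the $\limsup$ in the statement is a true limit.

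The only delicate point is the matching of the measure thresholds: the upper bound requires $\mu(G_n^+)\ge c$ (a consequence of SMB), while the lower bound needs $\mu(G_n^-)\ge 1-c/2$ in order to keep $\mu(E_n\cap G_n^-)$ uniformly positive and so absorb the $c/2$ loss into a negligible prefactor. Once both asymptotic probabilities are known to tend to $1$, the counting arguments are essentially combinatorial and the identity follows.
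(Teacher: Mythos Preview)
Your argument is correct and is exactly the standard derivation of this counting formula from the Shannon--McMillan--Breiman theorem; in fact you obtain the slightly stronger conclusion that the limit (not just the $\limsup$) exists. The paper does not give a proof at all but simply records the lemma as a consequence of Katok's entropy formula and SMB, so your proof spells out precisely what the paper's citation is pointing to. One cosmetic remark: invoking Egorov is unnecessary, since almost-everywhere convergence on a probability space already gives $\mu(G_n^\pm)\to 1$ directly.
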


Now we turn our attention to the following covering lemma for
mistake dynamical balls associated with points with slow
recurrence to the boundary of a given partition.

\begin{lemma} \label{yl32} Let $\mathcal{Q}$ be a finite partition
of $X$ and consider $\e>0$ arbitrary small. Let $V_\e$ denote the
$\e$-neighborhood of the boundary $\partial\mathcal{Q}$. For any
$\a>0$, there exists $\gamma>0$(depending only on $\a$),  such
that for every $x\in X$ satisfying
$\sum_{j=0}^{n-1}\chi_{V_\e}(f^jx)<\gamma n$, the mistake
dynamical ball $B_n(g;x,\e)$ can be covered by $e^{\a n}$
cylinders of $\mathcal{Q}^{(n)}$ for sufficiently large $n$.
\end{lemma}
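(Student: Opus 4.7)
The plan is to show that under the slow-recurrence hypothesis on $x$, any point $y\in B_n(g;x,\e)$ is forced to share the same $\mathcal{Q}$-coding as $x$ on a large fraction of the indices in $[0,n-1]$; counting the remaining ``free'' indices together with the number of admissible mistake sets $\Lambda\in I(g;n,\e)$ will then give a cover of subexponential size whose exponent can be made smaller than any prescribed $\alpha$ by choosing $\gamma$ small enough.

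First I would introduce the set of good times
$$
J(x,n)=\{i\in[0,n-1]\cap\N : f^ix\notin V_\e\},
$$
so that the hypothesis $\sum_{j=0}^{n-1}\chi_{V_\e}(f^jx)<\gamma n$ rewrites as $\#J(x,n)>(1-\gamma)n$. The key geometric observation is that for every $i\in J(x,n)$ the ball $B(f^ix,\e)$ lies inside the unique element $\mathcal{Q}(f^ix)\in\mathcal{Q}$ containing $f^ix$, since $d(f^ix,\partial\mathcal{Q})\ge\e$. Consequently, for any $\Lambda\in I(g;n,\e)$ and any $y\in B_\Lambda(x,\e)$, the inequality $d(f^ix,f^iy)<\e$ for $i\in\Lambda$ forces $f^iy\in\mathcal{Q}(f^ix)$ at every $i\in\Lambda\cap J(x,n)$. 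Since
$$
\#(\Lambda\cap J(x,n))\ge \#\Lambda+\#J(x,n)-n\ge n-\gamma n-g(n,\e),
$$
the element of $\mathcal{Q}^{(n)}$ containing $y$ is determined by the coding of $x$ on $\Lambda\cap J(x,n)$, while the at most $\gamma n+g(n,\e)$ remaining coordinates may carry an arbitrary label in $\mathcal{Q}$.

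Fixing $\Lambda$ this gives at most $(\#\mathcal{Q})^{\gamma n+g(n,\e)}$ cylinders of $\mathcal{Q}^{(n)}$ meeting $B_\Lambda(x,\e)$. Summing over $\Lambda$ and using
$$
\#I(g;n,\e)\le (g(n,\e)+1)\binom{n}{g(n,\e)},
$$
together with the entropy-type bound $\binom{n}{k}\le e^{nH(k/n)}$ and the fact that $g(n,\e)/n\to 0$, produces a total cover of $B_n(g;x,\e)$ of size at most
$$
\#I(g;n,\e)\cdot(\#\mathcal{Q})^{\gamma n+g(n,\e)} \le \exp\bigl(n(\gamma\log\#\mathcal{Q}+o(1))\bigr)
$$
as $n\to\infty$. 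Choosing $\gamma=\alpha/(2\log\#\mathcal{Q})$ (and $\gamma<1$) then renders the right-hand side at most $e^{\alpha n}$ for every sufficiently large $n$, and the resulting $\gamma$ depends only on $\alpha$ (with $\mathcal{Q}$ fixed).

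The only delicate step is the geometric one, namely that $B(f^ix,\e)\subset\mathcal{Q}(f^ix)$ whenever $f^ix\notin V_\e$; this is precisely the reason for working with the $\e$-neighborhood of $\partial\mathcal{Q}$, and it relies on the standard convention that the elements of $\mathcal{Q}$ are regular enough (their boundaries coinciding with the topological boundaries of their interiors) so that a ball disjoint from $\partial\mathcal{Q}$ sits inside a single element. All subsequent steps are purely combinatorial, balancing the subexponential factor $\#I(g;n,\e)=e^{o(n)}$ against the exponential factor $(\#\mathcal{Q})^{\gamma n}$ coming from the free coordinates, which we control through the free choice of $\gamma$.
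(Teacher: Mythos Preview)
Your argument is correct and follows essentially the same route as the paper: the same geometric observation $B(z,\e)\subset\mathcal{Q}(z)$ for $z\notin V_\e$, the same conclusion that the $\mathcal{Q}$-itinerary of any $y\in B_n(g;x,\e)$ can differ from that of $x$ at most at $\gamma n+g(n,\e)$ coordinates, and the same Stirling-type estimate to conclude. The only cosmetic difference is in the combinatorial bookkeeping: the paper bounds the number of cylinders directly by $\binom{n}{\gamma n+g(n,\e)}(\#\mathcal{Q})^{\gamma n+g(n,\e)}$ (choosing the set of positions where the itinerary may differ, then the labels), whereas you first enumerate over $\Lambda\in I(g;n,\e)$ and then over the free labels, yielding $\#I(g;n,\e)\cdot(\#\mathcal{Q})^{\gamma n+g(n,\e)}$; both expressions are $e^{(\gamma\log\#\mathcal{Q}+o(1))n}$ and the conclusion is identical.
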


\begin{proof}
Fix an arbitrary $\a>0$. Since $B(z,\e)\subset \mathcal{Q}(z)$ for
each $z\notin V_\e$, the itinerary of any point $y\in B_n(g;x,\e)
$ for which $\sum_{j=0}^{n-1}\chi_{V_\e}(f^jx)<\gamma n$ will
differ from the one of $x$ by at most $[\gamma n]+g(n,\e)$ choices
of partition elements. Since there are at most
 $$
 \left(
\begin{array}{c}
n \\
 \gamma n + g(n,\e)
\end{array}
 \right)
 \; \;
( \# \mathcal{Q})^{\gamma n+g(n,\e)}
 $$
such choices and $\lim_{n\rightarrow\infty}\frac{g(n,\e)}{n}=0$,
the previous upper can be made smaller than $e^{\a n}$ provided
that $\gamma>0$ is small and $n$ is sufficiently large. This
completes the proof of the lemma.
\end{proof}

\subsection{Free energy}\label{s.free.energy}

In this subsection,  we will define and collect some important
characterizations for the free energy of subshifts of finite
type. Given an observable $\phi: X\to \mathbb R$ the \emph{free
energy} is defined as
\begin{equation}\label{freeenergy32}
c_{\phi,t} = \limsup_{n\to\infty} \frac1n \log \int e^{S_n t \phi}
\,d\mu.
\end{equation}
This functional is very used in the physics and large deviations
literature since its  Legendre transform is an upper bound for the
measure of deviation sets. Let us recall a very interesting
formula for the free energy of subshifts of finite type.

\begin{lemma}\cite[Lemma~2.1]{msuz} \label{lemma33}
Let $f:X\to X$ be a topological mixing subshift of finite type,
let $\phi: X\to\mathbb R$ be a H\"older continuous potential and
let $\mu=\mu_\phi$ be the unique equilibrium state for $f$ and
$\phi$. Then the limit
$$
\lim_{n\to\infty} \frac1n \log \int e^{S_n t \phi} \,d\mu
    =P_{\text{top}}(f,(t+1)\phi) - P_{\text{top}}(f,\phi)
$$
does exist and coincides with the free energy $c_{\phi,t}$. In
particular,
$c_{\phi,1}=P_{\text{top}}(f,2\phi)-P_{\text{top}}(f,\phi)$
\end{lemma}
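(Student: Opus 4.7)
The plan is to reduce the integral to a partition function via the Gibbs property of the equilibrium state. Since $f$ is a topologically mixing subshift of finite type and $\phi$ is Hölder continuous, classical thermodynamic formalism (Bowen, Ruelle, Sinai) guarantees that $\mu=\mu_\phi$ is a Gibbs measure: there exists a constant $C>0$ such that
\[
C^{-1}\, e^{S_n\phi(y)-nP_{\text{top}}(f,\phi)} \;\le\; \mu\bigl([y_0\dots y_{n-1}]\bigr) \;\le\; C\, e^{S_n\phi(y)-nP_{\text{top}}(f,\phi)}
\]
for every $n\ge 1$ and every $y\in X$, where $[y_0\dots y_{n-1}]$ is the $n$-cylinder containing $y$. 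This Gibbs property is the workhorse of the whole argument.

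First, I would partition the integral over the set $\mathcal{Q}^{(n)}$ of $n$-cylinders and pick a representative $w\in[w]$ in each cylinder:
\[
\int e^{S_n t\phi}\,d\mu \;=\; \sum_{[w]\in\mathcal{Q}^{(n)}} \int_{[w]} e^{S_n t\phi}\,d\mu.
\]
Since $\phi$ is Hölder continuous, the Birkhoff sums $S_n\phi$ and $S_n t\phi$ have uniformly bounded oscillation on $n$-cylinders (the standard summable-variation bound $\sum_k \mathrm{var}_k\phi<\infty$). Therefore, for any $y\in[w]$, both $|S_n t\phi(y)-S_n t\phi(w)|$ and $|S_n\phi(y)-S_n\phi(w)|$ are bounded by a constant $D=D(\phi,t)$ independent of $n$. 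Combining this with the Gibbs estimate yields
\[
\int e^{S_n t\phi}\,d\mu \;\asymp\; e^{-nP_{\text{top}}(f,\phi)} \sum_{[w]\in\mathcal{Q}^{(n)}} e^{S_n(t+1)\phi(w)},
\]
with multiplicative constants depending only on $C$, $D$ and $\phi$.

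Second, I would identify the right-hand sum as the $n$-th partition function of the Hölder potential $(t+1)\phi$. For a subshift of finite type with a Hölder potential $\psi$, the usual sum over cylinders $\sum_{[w]} e^{S_n\psi(w)}$ (with any choice of representative in each cylinder) differs from the sum over $(n,\varepsilon)$-spanning sets by a bounded factor, so
\[
\lim_{n\to\infty}\frac1n\log \sum_{[w]\in\mathcal{Q}^{(n)}} e^{S_n(t+1)\phi(w)} \;=\; P_{\text{top}}(f,(t+1)\phi).
\]
Putting the two displays together gives the existence of a true limit and the identity
\[
\lim_{n\to\infty}\frac1n \log \int e^{S_n t\phi}\,d\mu \;=\; P_{\text{top}}(f,(t+1)\phi)-P_{\text{top}}(f,\phi),
\]
which by definition equals $c_{\phi,t}$ (and specializing $t=1$ gives the claimed expression for $c_{\phi,1}$).

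The main obstacle is really only the first step: one must have the Gibbs property available and verify that the bounded-distortion errors coming from (i) the fluctuation of $S_n\phi$ on $n$-cylinders and (ii) the Gibbs constant $C$ are genuinely multiplicative constants independent of $n$, so that they disappear after taking $\tfrac1n\log(\cdot)$. Once this is set up carefully, identifying the resulting sum with the pressure of $(t+1)\phi$ is a standard consequence of the definition of topological pressure on an SFT via cylinder sums.
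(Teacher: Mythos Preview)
Your argument is correct and is the standard Gibbs-property proof of this identity. Note, however, that the paper does not actually supply a proof of this lemma: it is quoted verbatim from \cite[Lemma~2.1]{msuz} and used as a black box, so there is no in-paper proof to compare against. Your write-up is precisely the classical argument one would expect to find in the cited reference, and the only points to keep tidy are exactly the ones you flag: the bounded-distortion constant coming from the H\"older property and the Gibbs constant $C$ are both independent of $n$, hence vanish after taking $\tfrac1n\log$, and the cylinder partition function for a H\"older potential on a mixing SFT has exponential growth rate equal to the topological pressure.
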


In this context the free energy is continuous and differentiable and it plays a key role in the theory
of large deviations. Indeed, it follows from Ellis Large Deviation Theorem that for every H\"older continuous $\psi$
$$
\lim_{n\to\infty} \frac1n \log
    \mu_\phi\left( x\in X:  \left|\frac1n \sum_{j=0}^{n-1} \psi(f^j(x)) - \int \psi\; d\mu_\phi\right|>\delta \right)
    =-\hat I_\phi(\delta)
$$
where $\hat I_\phi$ denotes the Legendre transform of the free energy function
$$
C_\phi\colon t\mapsto \lim_{n\to\infty} \frac1n \log \int e^{S_n t\psi} \; d\mu_\phi,
$$
that is, $\hat I_\phi (t)=\sup_{s} \{ st-C_\phi(s) \}$. Moreover, $\hat I_\phi$ is differentiable and attains a minimum $\hat I_\phi(0)=0$.  As a consequence of the variational  relationship between
$C_\phi$ and $\hat I_\phi$ we have also $C_\phi(t)=\sup_s\{ st+\hat I_\phi(s)\}$.

\subsection{Mistake functions}

In this subsection we show that the almost specification property does not depend on the unbounded
mistake function that we consider.

\begin{lemma}\label{l.mistake}
Let $g$ be a mistake function and $\e>0$. If a TDS $(X,f)$
satisfies the $g$-almost specification property with scale $\e$
then $f$ satisfies the $\tilde g$-almost specification property
for every mistake function $\tilde g$ satisfying
\begin{equation}\label{eq.cond}
\liminf_{k\to\infty } \;[\tilde g(\e,k)- g(\e,k)] \geq 0.
\end{equation}
\end{lemma}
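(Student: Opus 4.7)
The plan is to reduce $\tilde g$-almost specification to $g$-almost specification by observing that enlarging the mistake function only enlarges the associated mistake dynamical balls.

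First I would record the basic monotonicity: if $h_1(n,\e)\le h_2(n,\e)$, then the index family
$I(h_1;n,\e)=\{\Lambda\subset[0,n-1]:\#\Lambda\ge n-h_1(n,\e)\}$ is contained in $I(h_2;n,\e)$, since the lower bound $n-h_2(n,\e)$ on $\#\Lambda$ is weaker. Taking the union of the sets $B_\Lambda(x,\e)$ over the respective families yields the inclusion $B_n(h_1;x,\e)\subset B_n(h_2;x,\e)$ for every $x\in X$.

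Next, from hypothesis \eqref{eq.cond} I would extract an integer threshold. Since $g$ and $\tilde g$ take values in $\mathbb{N}$, the condition $\liminf_{k\to\infty}[\tilde g(k,\e)-g(k,\e)]\ge 0$ implies that $\tilde g(k,\e)-g(k,\e)>-\tfrac12$ for all $k$ large enough, which for integer-valued functions forces $\tilde g(k,\e)\ge g(k,\e)$ for all $k\ge N_0$ for some $N_0=N_0(\e)$. Set
\[
N(\tilde g,\e)=\max\{N_0(\e),\,N(g,\e)\}.
\]

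Finally I would put the pieces together. For any $x,y\in X$ and any $n,m\ge N(\tilde g,\e)$, the first step gives
\[
B_m(g;y,\e)\subset B_m(\tilde g;y,\e)\qquad\text{and}\qquad B_n(g;x,\e)\subset B_n(\tilde g;x,\e),
\]
and applying $f^{-m}$ preserves the second inclusion. Invoking the $g$-almost specification property (which applies because $n,m\ge N(g,\e)$) yields
\[
\varnothing\neq B_m(g;y,\e)\cap f^{-m}(B_n(g;x,\e))\subset B_m(\tilde g;y,\e)\cap f^{-m}(B_n(\tilde g;x,\e)),
\]
so the $\tilde g$-almost specification property holds with constant $N(\tilde g,\e)$, completing the proof.

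There is no genuine obstacle here; the only subtlety worth flagging is the passage from a real-valued $\liminf\ge 0$ to the pointwise inequality $\tilde g\ge g$ for large $n$, which uses integer-valuedness of the mistake functions (as required by the definition in the excerpt).
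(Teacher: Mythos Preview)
Your proof is correct and follows essentially the same route as the paper: establish the inclusion $B_n(g;\cdot,\e)\subset B_n(\tilde g;\cdot,\e)$ from monotonicity of the index families, extract a threshold $N_0$ beyond which $\tilde g(\cdot,\e)\ge g(\cdot,\e)$, set $N(\tilde g,\e)=\max\{N_0,N(g,\e)\}$, and conclude via the $g$-almost specification. Your justification of the threshold step via integer-valuedness is in fact more explicit than what the paper writes.
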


\begin{proof}
Let the mistake function $g$ and $\e$ be fixed, and consider an arbitrary mistake function $\tilde g$ satisfying~\eqref{eq.cond}.  Since $f$ satisfies the $g$-almost specification property then there exists a positive integer
$N(g,\e)$ such that
\begin{equation}\label{eq.aux}
B_{n_1}(g;x_1,\e) \cap f^{-n_1}(B_{n_2}(g;x_2,\e)) \neq \emptyset
\end{equation}
for every $n_1,n_2\ge N(g,\e)$ and $x_1,x_2\in X$. Our assumption
yields that
$$
N(\tilde g,\e)=\max\left\{
        N( g,\e) , \inf\{k\in\mathbb N : \tilde g(\e,\ell) \geq g(\e,\ell) \text{ for every } \ell \geq k\}
        \right\}
$$
is well defined and finite. Moreover, it is clear that
$B_{n_i}(g;x_i,\e) \subset B_{n_i}(\tilde g;x_i,\e)$ for every $x_i\in X$, every $n_i\ge N(\tilde g,\e)$ and all $i=1,2$. In particular we deduce that
$$
B_{n_1}(\tilde g;x_1,\e) \cap f^{-n_1}(B_{n_2}(\tilde g;x_2,\e))
    \supset B_{n_1}(g;x_1,\e) \cap f^{-n_1}(B_{n_2}(g;x_2,\e))
    \neq \emptyset
$$
for every $n_1,n_2 \geq N(\tilde g,\e)$, which concludes the proof of the lemma.
\end{proof}

\subsection{Suspension semiflows}

In this subsection we recall the Abramov formula, that relates the entropy of a suspension semiflow with the entropy of invariant measures for the global Poincar\'e first return transformation, which will be of particular use in the
proof of Theorem~\ref{thm.flows}.

\begin{lemma}\label{l.Abramov}
Let $\sigma:\Si\to\Si$ be a continuous transformation, $\mu$ be a
$\sigma$-invariant probability measure and $\varphi:\Si \to
\R^{+}$ be a strictly positive and $\mu$-integrable roof function.
Then the associated suspension semiflow $(f^t)_t$ satisfies
$$
h_{\overline\mu}((f^t)_t):= h_{\overline\mu}(f^1)
    =\frac{h_\mu(\sigma)}{\int \varphi \, d\mu},
$$
where $\overline \mu$ is the $(f^t)$-invariant measure given by \eqref{eq.bijection}.
\end{lemma}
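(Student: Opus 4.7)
My plan is as follows. The first equality $h_{\overline\mu}((f^t)_t):=h_{\overline\mu}(f^1)$ should be read as the \emph{definition} of the metric entropy of a continuous-time (semi)flow; it is justified by the classical identity $h_{\overline\mu}(f^t)=|t|\,h_{\overline\mu}(f^1)$ for every $t>0$. Thus the substantive claim is the Abramov formula $h_{\overline\mu}(f^1)=h_\mu(\sigma)/\int\varphi\,d\mu$, which I would establish by comparing generating partitions of $(\sigma,\mu)$ with suitably adapted partitions of $(f^1,\overline\mu)$.

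Given a finite measurable partition $\mathcal{Q}$ of $\Sigma$ and an integer $N\ge 1$, I would consider the partition of $Y$
$$
\mathcal{P}_{\mathcal{Q},N}=\left\{\bigl(Q\times[k/N,(k+1)/N)\bigr)\cap Y\,:\,Q\in\mathcal{Q},\;k\in\mathbb{N}\right\}.
$$
As $\mathcal{Q}$ ranges over generators for $\sigma$ and $N\to\infty$, these form a generating family for $f^1$. By Birkhoff's ergodic theorem applied to the roof $\varphi$, for $\overline\mu$-a.e.\ $(x,s)\in Y$ the number $k_n(x,s)$ of crossings of the section $\Sigma\times\{0\}$ by the orbit $\{f^j(x,s):1\le j\le n\}$ satisfies $k_n(x,s)/n\to 1/\int\varphi\,d\mu$. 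Consequently, outside a set of asymptotically negligible $\overline\mu$-measure, the atom $\mathcal{P}_{\mathcal{Q},N}^{(n)}(x,s)$ of the $n$-step refinement under $f^1$ is comparable to a product $\mathcal{Q}^{(k_n)}(x)\times J$ with $J$ an interval of length at most $1/N$, giving
$$
\overline\mu\bigl(\mathcal{P}_{\mathcal{Q},N}^{(n)}(x,s)\bigr)\;\approx\;\frac{\mu\bigl(\mathcal{Q}^{(k_n)}(x)\bigr)}{N\int\varphi\,d\mu}.
$$

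Applying the Shannon--McMillan--Breiman theorem to both $\sigma$ and $f^1$, together with the above asymptotics of $k_n$, would then yield
$$
h_{\overline\mu}(f^1,\mathcal{P}_{\mathcal{Q},N})=\lim_{n\to\infty}-\frac{1}{n}\log\overline\mu\bigl(\mathcal{P}_{\mathcal{Q},N}^{(n)}(x,s)\bigr)=\frac{h_\mu(\sigma,\mathcal{Q})}{\int\varphi\,d\mu},
$$
and taking the supremum over $\mathcal{Q}$ and $N$ produces the Abramov formula. The principal obstacle is the control of boundary effects: atoms of $\mathcal{P}_{\mathcal{Q},N}^{(n)}$ may leak into slightly wrong $\sigma$-itineraries whenever $k_n$ fluctuates around its mean, and the vertical intervals may be truncated near the ceiling $\varphi$. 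I would handle these by restricting to a Birkhoff-typical subset of full $\overline\mu$-measure and absorbing the combinatorial count of admissible fluctuations of $k_n$ into a subexponential factor, in the spirit of the covering argument of Lemma~\ref{yl32}, so that neither effect alters the limit.
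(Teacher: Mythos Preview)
The paper does not prove this lemma: it is stated in the preliminaries subsection entitled ``Suspension semiflows'' with the explicit preamble ``we recall the Abramov formula,'' and no proof is given. It is treated as a classical fact to be quoted, not established.

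Your sketch is a reasonable outline of one standard route to Abramov's formula (product partitions on $Y$, counting section crossings via Birkhoff, and then Shannon--McMillan--Breiman), but note two points. First, as written the lemma does not assume ergodicity of $\mu$, so your appeal to Birkhoff and SMB to get \emph{constant} limits $1/\int\varphi\,d\mu$ and $h_\mu(\sigma,\mathcal{Q})$ would need to be replaced by the $L^1$ or a.e.\ versions with $x$-dependent limits, followed by an integration (or by ergodic decomposition). Second, the heuristic $\overline\mu\bigl(\mathcal{P}_{\mathcal{Q},N}^{(n)}(x,s)\bigr)\approx \mu\bigl(\mathcal{Q}^{(k_n)}(x)\bigr)/(N\int\varphi\,d\mu)$ is not literally correct at the level of sets; what one actually controls is the exponential rate, and the clean way to do this is to bound $h_{\overline\mu}(f^1,\mathcal{P}_{\mathcal{Q},N})$ above and below by $h_\mu(\sigma,\mathcal{Q})/\int\varphi\,d\mu$ plus error terms coming from the fluctuation of $k_n$. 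None of this is fatal, but since the paper simply cites the result, the appropriate move here is to do the same rather than to reprove it.
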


\section{Proof of the main results}
\setcounter{section}{4} \setcounter{equation}{0}

In this section, we will show our main results that relate entropy
and topological pressure with the first and minimal return times
to mistake dynamical balls.

\subsection{Proof of Theorem A} \begin{proof}First we note that the limits
in the statement of Theorem A are indeed well defined almost
everywhere. Given $n\geq 1, \e>0$ and $x\in X$, we claim that
\begin{eqnarray} \label{ds41}
R_n(g;x,\e)\geq R_{n-1}(g;f(x),\e).
\end{eqnarray} Indeed, $f^{R_n(g;x,\e)}(x)\in
B_n(g;x,\e)$ implies that $$f^{R_n(g;x,\e)}(f(x))\in
f(B_n(g;x,\e))\subset B_{n-1}(g;f(x),\e),$$ which immediately
implies the claim (\ref{ds41}). Define
\[
\overline{h}_g(x,\e)=\limsup_{n\rightarrow
 \infty}\frac{1}{n}\log R_n(g;x,\e)
\quad \text{ and }\quad
 \underline{h}_g(x,\e)=\liminf_{n\rightarrow
 \infty}\frac{1}{n}\log R_n(g;x,\e).
\]
It follows from (\ref{ds41}) that $\overline{h}_g(x,\e)\geq
\overline{h}_g(f(x),\e)$ and $\underline{h}_g(x,\e)\geq
\underline{h}_g(f(x),\e)$. Since $\mu\in \mathcal{E}(X,f)$, these
functions are almost everywhere constant and their value will be
denoted by $\overline{h}_g(\e)$ and $\underline{h}_g(\e)$
respectively. Put
\[
\overline{h}_g(f)=\lim_{\e\rightarrow
0}\overline{h}_g(\e)~~\hbox{and}~~\underline{h}_g(f)=\lim_{\e\rightarrow
0}\underline{h}_g(\e),
\]
such limits do exists by monotonicity of the functions
$\overline{h}_g(\e)$ and $\underline{h}_g(\e)$. Hence, to prove
the theorem, it suffices to prove the following inequalities
\begin{eqnarray} \label{ds42}
\overline{h}_g(f)\leq h_{\mu}(f)\leq \underline{h}_g(f).
\end{eqnarray}

 It is easy to prove the left hand side inequality in
 (\ref{ds42}). Since $B_n(x,\e)\subset B_n(g;x,\e)$ implies that
 $R_n(x,\e)\geq R_n(g;x,\e)$. Then using the previous results
in \cite{dw,var} we get
\[
\overline{h}_g(f)=\lim_{\e\rightarrow 0}\limsup_{n\rightarrow
 \infty}\frac{1}{n}\log R_n(g;x,\e)\leq \lim_{\e\rightarrow 0}\limsup_{n\rightarrow
 \infty}\frac{1}{n}\log R_n(x,\e)=h_{\mu}(f)
\]
for $\mu$-almost every $x$.

We are left to prove the second inequality in (\ref{ds42}).
Despite the fact that some admissible mistakes can occur their effect
is neglectable from the combinatorial point of view. We follow the strategy in \cite{var}
that we include here for completeness. Assume, by
contradiction that $h_{\mu}(f)> \underline{h}_g(f)$. We pick a
finite partition $\mathcal{Q}$ of $X$ such that
$\mu(\partial\mathcal{Q})=0$ and $h_{\mu}(f)\geq
h_{\mu}(f,\mathcal{Q})> b>a>\underline{h}_g(f)$. Fix $0<\gamma
<(b-a)/6$ small such that Lemma \ref{yl32} holds for $\a=(b-a)/2$.
Pick also a sufficiently small $\e>0$ so that the
$\e$-neighborhood $V_\e$ of the boundary $\partial\mathcal{Q}$ then
$\mu(V_\e)<\gamma/2$. By Birkhoff's ergodic theorem, there exists
$N_0>1$ large such that the following set
\[
A=\left\{ x\in X: \sum_{j=0}^{n-1}\chi_{V_\e}(f^jx)<\gamma n,~\forall
n\geq N_0\right\}
\]
has $\mu$-measure larger than $1-\gamma$. By Lemma \ref{yl32} each
mistake dynamical ball $B_l(g;z,\e)$ of length $l\geq N_0$
centered at any point $z\in A$ can be covered by $e^{\a l}$
cylinders of $\mathcal{Q}^{(l)}$. Furthermore, provided that
$N_1\geq N_0$ is large enough, the measure of the set
\[
B=\{ x\in X: \exists N_0\leq n\leq N_1~\hbox{s.t.}~R_n(g;x,\e)\leq
e^{\a n} \}
\]
is also larger than $1-\gamma$. For notational simplicity we shall
omit the dependence of the sets $A$ and $B$ on the integers $N_0$
and $N_1$. Using Birkhoff's ergodic theorem again, there exists
$N_2>1$ large such that the set
\[
\Gamma=\left\{ x\in X:\sum_{j=0}^{k-1}\chi_{A\cap
B}(f^jx)>(1-3\gamma)k,~\forall k\geq N_2\right\}
\]
has $\mu$-measure at least $1/2$. We claim that there exists a positive
constant $C$ so that $\Gamma$ is covered by $Ce^{bk}$ cylinders
of $\mathcal{Q}^{(k)}$, for every large $k$. This will lead to the contradiction
\[
h_{\mu}(f,\mathcal{Q})=\lim_{n\rightarrow\infty}\frac{1}{n}\log
N(k,\mathcal{Q},1/2)<b,
\]
and so proving the theorem.

In the following, we prove the previous claim. Fix $x\in \Gamma$ and
$k\gg N_2$. We proceed to divide the set $\{0,1,\cdots, k\}$ into
blocks according to the recurrence properties of the orbit of $x$. If
$x\notin A\cap B$ then we consider the block $[0]$. Otherwise, we
take the first integer $N_0\leq m\leq N_1$ such that
$R_m(g;x,\e)\leq e^{am}$ and consider the block $[0,1,\cdots,
m-1]$. We proceed recursively and, if $\{1,2,\cdots, k'\}$ (where $k'<k$)
is partitioned into blocks then the next block is $[k'+1]$ if
$f^{k'+1}(x)\notin A\cap B$ and  it will be
$[k'+1,k'+2,\cdots,k'+m']$ if $f^{k'+1}(x)\in A\cap B$ and $m'$ is
the first integer in $[N_0,N_1]$ such that
$R_{m'}(g;f^{k'+1}(x),\e)\leq e^{am'}$. This process will finish
after a finite number of steps and partitions $\{0,1,\cdots, k\}$
according to the recurrence properties of the iterates of $x$,
except possibly the last block which has size at most $N_1$. We
write the list of sequence of block lengths determined above as
$\iota(x)=[m_1,m_2,\cdots, m_{i(x)}]$. By construction there are
at most $3\gamma k$ blocks of size one. This enable us to give an
upper bound on the number of $k$-cylinders $\mathcal{Q}^{(k)}$
necessary to cover $\Gamma$. First, note that since each $m_i$ is
either one or larger than $N_0$, there are at most $k/N_0$ blocks
of size larger than $N_0$. Hence, there are at most
$$
\sum_{j\leq 3\gamma k}
 \left(
\begin{array}{c}
k/N_0+3\gamma k \\
j
\end{array}
 \right)
 \; \;
\leq 3\gamma k
\left(
\begin{array}{c}
k/N_0+3\gamma k \\
3\gamma k
\end{array}
 \right)
$$
possibilities
to arrange the blocks of size one. Now, we give an estimate on the
number of possible combinatorics for every prefixed configuration
$\iota=[m_1,m_2,\cdots, m_{l}]$ satisfying $\sum m_j=k$ and
$\# \{j:m_j=1\}<3\gamma k $. This will be done fixing elements
from the right to the left. Define $M_j=\sum_{i\leq j}m_i$. If
$x\in \Gamma$ is such that $\iota(x)=l$, there are at most $\#
\mathcal{Q}$ possibilities to choose a symbol for each block of
size one. Moreover, if $1\leq s \leq l$ is the first integer such
that $\sum_{i=s}^{l}m_i<N_1+e^{aN_1}$ then there are at most
$(\# \mathcal{Q})^{N_1+e^{aN_1}}$ possibilities for choices of
$(m_s+m_{s+1}+\cdots +m_l)$-cylinders with combinatorics
$[m_s,m_{s+1},\cdots ,m_l]$. Recall that
$R_{m_{s-1}}(g;f^{M_{s-2}}(x),\e)\leq e^{a m_{s-1}}\leq e^{aN_1}$
and, by Lemma \ref{yl32}, the mistake dynamical ball
$B_{m_{s-1}}(g;f^{M_{s-2}}(x),\e)$ can be covered by at most
$e^{\a m_{s-1}}$ cylinders in $\mathcal{Q}^{(m_{s-1})}$. Hence,
the possible itineraries for the $m_{s-1}$ iterates
$\{f^{M_{s-2}}(x),\cdots ,f^{M_{s-1}}(x)\}$ may be chosen among
$e^{\a m_{s-1}}$ options corresponding to each of the $e^{a
m_{s-1}}$ previously possibly distinct and fixed blocks of size
$m_{s-1}$ in $[m_s,\cdots, m_l]$. This shows that there are at
most $e^{(a+\a)m_{s-1}}$ possible itineraries for the $m_{s-1}$
iterations of $f^{M_{s-2}}(x)$. Proceeding recursively for
$m_{s-2},\cdots, m_2,m_1$ we conclude, after some steps, that
there exists $C>0$(depending only on $N_1$) such that if $\gamma$
was chosen small then $\Gamma$ can be covered by
$$
3\gamma k
\left(
\begin{array}{c}
k/N_0+3\gamma k \\
3\gamma k
\end{array}
 \right)
(\#
\mathcal{Q})^{N_1+e^{aN_1}+3\gamma k}e^{(a+\a)k}\leq Ce^{bk}
$$
cylinders in $\mathcal{Q}^{(k)}$. This proves the claim and
finishes the proof of the theorem.\end{proof}

\subsection{Proof of Theorem B}\begin{proof} Given a mistake function $g$.  First, we note that the
 $g$-almost specification property guarantees that for every small
$\e>0$  there exists an integer $N(g,\e)$ such that for each $x\in
X$ and $n\geq N(g,\e)$ we have $B_{n}(g;x,\e) \cap
f^{-n}(B_{n}(g;x,\e)) \neq \emptyset$. Therefore, we have
\[
\limsup_{n\rightarrow\infty}\frac{1}{n}S_n(g;x,\e)\leq 1
\]
for every small $\e>0$. In particular $\overline{S}(x)\leq 1$ for each $x\in X$.

Next, we prove that $\underline{S}(x)\geq 1$ for $\mu$-almost
every $x$. We claim that, for any $0<\eta <1$, there exists a
measurable set $E_\eta$ with $\mu(E_\eta)>1-\eta$ and
\[
\mu(\{x\in E_\eta: S_n(g;x,\e)\leq \eta n\} )
\]
is summable for every small $\e$. Using Borel-Cantelli lemma it
will follow that $\mu$-almost every $x\in E_\eta$ satisfies $ S_n(g;x,\e)>
\eta n$ for all but finitely many values of $n$ and every small
$\e$. Then the desired result will follow from the arbitrariness
of $\eta$.

 We are only left to  prove the claim above. Let $\eta \in
 (0,1)$ and fix a small
 $0<\a<\frac{1}{3}(1-\eta)h_{\mu}(f)$. Consider a finite partition
 $\mathcal{Q}$ of $X$ with $\mu(\partial\mathcal{Q})=0$ and
 $3\a<(1-\eta)h$, where $h=h_{\mu}(f,\mathcal{Q})>0$. If $\e_0$ is
 small enough then $\mu(V_\e)<\gamma/2$ for every $0<\e<\e_0$,
 where $\gamma=\gamma(\a)>0$ is given as in Lemma~\ref{yl32}.  Using
 Birkhoff's ergodic theorem together with the Shannon-McMillan-Breiman's theorem
 we deduce that  for almost every $x$, there exists an integer $N(x)\geq 1$ so
 that for every $n\geq N(x)$
 \begin{eqnarray} \label{ds43}
 \sum_{j=0}^{n-1}\chi_{V_\e}(f^j(x))<\gamma n
 \quad
 \text{and}
 \quad
 e^{-(h+\a)n}\leq \mu(\mathcal{Q}^{(n)}(x))\leq
 e^{-(h-\a)n}
 \end{eqnarray}
 where $\mathcal{Q}^{(n)}(x)$ denotes the element of
 $\mathcal{Q}^{(n)}$ which contains $x$. By Lemma~\ref{yl32}, each
 mistake dynamical ball $B_n(g;x,\e)$ is covered by a collection
 $\mathcal{Q}^{(n)}(g,x,\e)$ of $e^{\a n}$ cylinders of the
 partition $\mathcal{Q}^{(n)}$. Pick $N\geq 1$ large such that the
 following set
 $$E_\eta=\{x\in X: x~\hbox{satisfying}~(\ref{ds43})~,\forall n\geq N\} $$
 has measure bigger than $1-\eta$. Since $\mathcal{Q}$ is finite,
 there exists $K>0$ such that
 \[
K^{-1}e^{-(h+\a)n}\leq \mu(\mathcal{Q}^{(n)}(x))\leq K
e^{-(h-\a)n}
 \]
 for every $x\in E_\eta$ and every $n\geq 1$. We consider now the level sets
 $E_{\eta}(n,k)=\{x\in E_\eta: S_n(g;x,\e)=k\}$ and observe that
 $B_n(g;x,\e)\subset \bigcup\limits_{Q_n \in \mathcal{Q}^{(n)}(g,x,\e)}Q_n$. Thus,
 if $x\in E_{\eta}(n,k)$, then the mistake
dynamical ball $B_n(g;x,\e)$ is contained in the sub-collection
 of cylinders $Q_n\in \mathcal{Q}^{(n)}(g,x,\e)$ whose iteration by
 $f^k$ intersects any of the $n$-cylinders of
 $\mathcal{Q}^{(n)}(x,\e)$. Any such cylinders $Q_n$ are naturally determined
 by their first $k$ symbols and by the at most $e^{\a n}$ possible
 strings following them. So, the number of those cylinders is
 bounded by $e^{\a n}$ times the number of cylinders in
 $\mathcal{Q}^{(k)}$ that intersect $E_\eta$, that is, $e^{\a n}K
 e^{(h+\a)k}$. Hence, if $n\geq N$, we have
 \[
 \mu(\{x\in E_\eta: S_n(g;x,\e)<\eta n\})\leq \sum_{k=1}^{\eta n}
 \sum_{Q_n\cap E_\eta(n,k)\neq \emptyset}\mu(Q_n)
    \leq K\eta n \, e^{-(h-2\a)n}e^{(h+\a)\eta n}
 \]
 which is summable because $(h-2\a)-(h+\a)\eta >(1-\eta)h-3\a>0$.
 This proves our claim and finishes the proof of theorem B.\end{proof}

\subsection{Proof of Theorem C}

Here we prove the two inequalities of Theorem~C independently. The first one, inspired by Lemma~2.1 in \cite{msuz}, does not depend on the TDS.

\begin{lemma}\label{l.upper1}
Given $\mu\in\cE(X,f)$, a mistake function $g$ and a partition $\cQ$ then
$$
\limsup_{n\to\infty} \frac1n \log \left[\sum_{j=0}^{R_n(g;x,\cQ)} e^{S_n\phi (f^j(x))}\right]
    \leq h_\mu(f,\cQ) + c_{\phi,1},\ \mu-a.e.\ x.
$$
Moreover, if $\mu\in\cE(X,f)$ with $h_{\mu}(f)>0$, then
$$
\limsup_{n\to\infty} \frac1n \log \left[\sum_{j=0}^{S_n(g;x,\cQ)} e^{S_n\phi (f^j(x))}\right]
    \leq  c_{\phi,1},\ \mu-a.e.\ x.
$$
\end{lemma}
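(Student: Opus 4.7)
The plan is to establish both upper bounds via an $L^1(\mu)$ estimate followed by Markov's inequality and Borel--Cantelli. In both cases, for fixed $\delta>0$ I will produce an integrated bound of the form $\int T_n \, d\mu \leq A_n$ with $\frac{1}{n}\log A_n \leq \mathrm{target}+O(\delta)$; Markov then shows the level sets $\{T_n > A_n e^{n\delta}\}$ have summable $\mu$-measure, and Borel--Cantelli upgrades this to $\limsup \frac{1}{n}\log T_n \leq \mathrm{target}+O(\delta)$ for $\mu$-a.e.~$x$, after which letting $\delta\to 0$ gives the claim.

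For the first inequality, set $h := h_\mu(f,\cQ)$ and introduce the collection of heavy cylinders
\[
\mathcal D_n := \{Q \in \cQ^{(n)} : \mu(Q) \geq e^{-n(h+\delta)}\}, \qquad D_n := \bigcup_{Q \in \mathcal D_n} Q,
\]
so that $\# \mathcal D_n \leq e^{n(h+\delta)}$ and, by Shannon--McMillan--Breiman, $\mu$-a.e.\ $x$ lies in $D_n$ for all $n$ sufficiently large. Since the mistake cylinder $\cQ_g^{(n)}(x)$ contains the ordinary cylinder $Q:=\cQ^{(n)}(x)$, we have $R_n(g;x,\cQ) \leq R_Q(x)$, where $R_Q$ denotes the standard first return time to $Q$. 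Kac's formula applied to $h_n(y) := e^{S_n\phi(y)}$ gives
\[
\int_Q \sum_{j=0}^{R_Q(x) - 1} e^{S_n\phi(f^j x)} \, d\mu(x) = \int_X e^{S_n\phi} \, d\mu,
\]
and the boundary term $j=R_Q(x)$ is handled by invariance of the Poincar\'e return map $T_Q$ on $(Q,\mu|_Q/\mu(Q))$, yielding $\int_Q e^{S_n\phi(T_Q x)} \, d\mu = \int_Q e^{S_n\phi} \, d\mu$. Summing over $Q \in \mathcal D_n$ and using that $\int e^{S_n\phi}d\mu \leq e^{n(c_{\phi,1}+\delta)}$ for $n$ large (by definition of $c_{\phi,1}$), one obtains
\[
\int_{D_n} \sum_{j=0}^{R_n(g;x,\cQ)} e^{S_n\phi(f^j x)} \, d\mu \leq (\#\mathcal D_n+1)\int e^{S_n\phi} \, d\mu \leq 2 e^{n(h+c_{\phi,1}+2\delta)},
\]
which closes the first case upon the Markov/Borel--Cantelli routine and SMB-eventual membership in $D_n$.

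For the second inequality, the same strategy would pick up the factor $\#\mathcal D_n$, producing the unwanted $h$ in the exponent. Instead I use the SFT hypothesis of Theorem~\ref{thm.C}: topological mixing of the SFT supplies a uniform gap constant $M$ such that every $Q \in \cQ^{(n)}$ contains a periodic point of period at most $n+M$, and a fortiori so does every mistake cylinder $B_n(g;x,\cQ) \supseteq Q$. This produces the deterministic inequality $S_n(g;x,\cQ) \leq n+M$ for every $x$, so
\[
\int \sum_{j=0}^{S_n(g;x,\cQ)} e^{S_n\phi(f^j x)} \, d\mu \leq (n+M+1) \int e^{S_n\phi} \, d\mu \leq (n+M+1) e^{n(c_{\phi,1}+\delta)},
\]
and Markov/Borel--Cantelli again delivers the almost-sure upper bound $c_{\phi,1}$.

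The main obstacle lies in the first inequality: one must keep the exponent at the partition entropy $h$ rather than the combinatorial $\log\#\cQ$ that a blanket Kac sum over all of $\cQ^{(n)}$ would produce, and the restriction to the heavy cylinders $\mathcal D_n$---in combination with SMB guaranteeing eventual membership---is precisely what prevents this loss. In the $S_n$ case the subtlety is of a different nature: the weak bound $S_n \leq R_n \lesssim e^{nh}$ inherited from Theorem~A is too loose to yield $c_{\phi,1}$, and what is truly needed is an $O(n)$-type deterministic bound on $S_n$, supplied here by the mixing structure of the SFT.
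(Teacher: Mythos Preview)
Your proof is correct but takes a genuinely different route from the paper's.

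For the first inequality, the paper simply invokes equation~\eqref{eq.ent.partitions} (Theorem~\ref{thm.A} in partition form) to produce a set $X^1_\delta$ of measure $>1-\delta$ on which $R_n(g;x,\cQ)\le e^{(h+\delta)n}=:a_1(n)$ for all large $n$; replacing the random upper limit of summation by the deterministic $a_1(n)$ and using $f$-invariance of $\mu$ gives $\int\sum_{j=0}^{a_1(n)} e^{S_n\phi\circ f^j}\,d\mu \le (a_1(n)+1)\int e^{S_n\phi}\,d\mu$, after which Markov and Borel--Cantelli finish. Your argument bypasses Theorem~\ref{thm.A} entirely: you use instead the Kac--tower identity $\int_Q\sum_{j<R_Q} h\circ f^j\,d\mu=\int h\,d\mu$ on each heavy cylinder $Q\in\mathcal D_n$, and control $\#\mathcal D_n$ via Shannon--McMillan--Breiman. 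This is more self-contained (it does not feed the main theorem back into the lemma) and delivers the partition entropy $h_\mu(f,\cQ)$ directly.

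For the second inequality the paper argues in the same pattern, using equation~\eqref{eq.linear.partitions} to obtain $S_n(g;x,\cQ)\le(1+\delta)n=:a_2(n)$ on a set $X^2_\delta$ of measure $>1-\delta$, and then Markov/Borel--Cantelli. Note that \eqref{eq.linear.partitions} already requires $g$-almost specification, so despite the preamble the paper's proof of this half also leans on a structural hypothesis beyond ergodicity and positive entropy; your explicit appeal to the SFT mixing constant to get the pointwise bound $S_n\le n+M$ is in the same spirit and perfectly legitimate in the ambient context of Theorem~\ref{thm.C}. The trade-off: your bound is deterministic (no exceptional set), but is tied to the SFT setting, whereas the paper's version transfers verbatim to any system satisfying $g$-almost specification.
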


\begin{proof}
The arguments used are modifications of the arguments in
\cite[Lemma~2.1]{msuz} together with equations
\eqref{eq.ent.partitions} and \eqref{eq.linear.partitions}. We
include the proof here for the reader's convenience. Let $\de>0$
be small and fixed, and let $N\ge 1$ be large enough so that the
sets $X^1_\delta=\{x\in X : \log R_n(g;x,\cQ) \leq (h_\mu(f)+\de)
n, \forall n\geq N\} $ and $X^2_\delta=\{x\in X : S_n(g;x,\cQ)
\leq (1+\de) n, \forall n\geq N\} $ have measure at least $1-\de$. Let
$a_1(n)=e^{(h_\mu(f)+\de)n}$ and $a_2(n)=(1+\de)n$, then one can
use the Tchebychev's inequality and the invariance of the measure
$\mu$ to deduce that the measure of the set
$$
A^i_n(\de)
    =\left\{
    x\in X^i_\de : \sum_{j=0}^{a_i(n)} e^{S_n\phi (f^j(x))} > a_i(n)e^{(c_{\phi,1}+\de)n}
     \right\}~~(i=1,2)
$$
is bounded from above by
$$
\mu(A^i_n(\de))
    \leq \frac{1}{a_i(n)}e^{-(c_{\phi,1}+\de)n} \, \int \sum_{j=0}^{a_i(n)} e^{S_n\phi (f^j(x))} \,d\mu
    \leq e^{-\de n}  \; e^{-c_{\phi,1} n} \, \int e^{S_n\phi} \,d\mu,
$$
which is summable by Lemma \ref{lemma33}.  Using Borel-Cantelli
lemma it follows that $\mu$-a.e. $x\in X^i_\de (i=1,2)$ satisfies
\begin{eqnarray*}
\limsup_{n\to\infty} \frac1n \log \left(\sum_{j=0}^{R_n(g;x,\cQ)} e^{S_n\phi (f^j(x))}\right)
    &\leq& \limsup_{n\to\infty} \frac1n \log \left(\sum_{j=0}^{a_1(n)} e^{S_n\phi (f^j(x))}\right)\\
     &\leq& h_\mu(f) + c_{\phi,1}+2\de
\end{eqnarray*}
and
$$
\limsup_{n\to\infty} \frac1n \log \left(\sum_{j=0}^{S_n(g;x,\cQ)} e^{S_n\phi (f^j(x))}\right)
    \leq \limsup_{n\to\infty} \frac1n \log \left(\sum_{j=0}^{a_2(n)} e^{S_n\phi (f^j(x))}\right)
     \leq c_{\phi,1}+\de.
$$
The result follows from the arbitrariness of $\de$.
\end{proof}

In the remaining of the proof assume that $f:X\to X$ is a unilateral subshift of finite type and $\mathcal Q$ is a finite
Markov partition.
The second lemma uses more specific characterization of free
energy for equilibrium states associated to subshifts of finite
type as described before in subsection~\ref{s.free.energy}.

\begin{lemma}
Assume that $f:X\to X$ is a subshift of finite type,
$\mu=\mu_\phi$ is the unique equilibrium state with respect to the
H\"older continuous potential $\phi: X\to \mathbb R$ and $\cQ$  is
the partition of $X$ into initial cylinders of length one. Then
for every mistake function $g$,  it follows that
$$
\liminf_{n\to\infty} \frac1n \log \left[\sum_{j=0}^{R_n(g;x,\cQ)} e^{S_n\phi (f^j(x))}\right]
    \geq h_\mu(f) + c_{\phi,1}
$$
moreover, if  $h_{\mu}(f)>0$, then
$$
\liminf_{n\to\infty} \frac1n \log \left[\sum_{j=0}^{S_n(g;x,\cQ)} e^{S_n\phi (f^j(x))}\right]
    \geq  c_{\phi,1}
$$
for $\mu$-almost every $x$.
\end{lemma}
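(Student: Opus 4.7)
My plan is to unify both inequalities by using the Gibbs property of $\mu=\mu_\phi$ to convert Birkhoff exponentials into measures of $n$-cylinders, and then to estimate the resulting sum along orbits using the free energy formula of Lemma~\ref{lemma33}.

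First, by the Gibbs property of the equilibrium state, there is a constant $K \ge 1$ such that
\[
K^{-1} e^{S_n\phi(y)-nP_{\text{top}}(f,\phi)} \le \mu(\cQ^{(n)}(y)) \le K\, e^{S_n\phi(y)-nP_{\text{top}}(f,\phi)}
\]
for every $y\in X$ and $n\ge 1$. Applying this with $y=f^j(x)$ and summing yields
\[
\sum_{j=0}^{M(x)} e^{S_n\phi(f^j(x))} \;\asymp\; e^{nP_{\text{top}}(f,\phi)} \sum_{j=0}^{M(x)} \mu(\cQ^{(n)}(f^j(x))),
\]
where $M(x)$ is either $R_n(g;x,\cQ)$ or $S_n(g;x,\cQ)$.

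Second, Lemma~\ref{lemma33} combined with Gibbs gives the spatial estimate
\[
\int \mu(\cQ^{(n)}(y))\,d\mu(y) = \sum_{C\in\cQ^{(n)}} \mu(C)^2 \;\asymp\; e^{-nP_{\text{top}}(f,\phi)}\int e^{S_n\phi}\,d\mu \;=\; e^{n(c_{\phi,1}-P_{\text{top}}(f,\phi))+o(n)}.
\]
The key step is to show that for $\mu$-almost every $x$ and $n$ large,
\[
\sum_{j=0}^{M(x)} \mu(\cQ^{(n)}(f^j(x))) \;\gtrsim\; M(x)\cdot e^{n(c_{\phi,1}-P_{\text{top}}(f,\phi))-o(n)}.
\]
Combined with the recurrence bounds $R_n(g;x,\cQ)\ge e^{n(h_\mu-\eps)}$ a.s. (from Theorem~A and equation~\eqref{eq.ent.partitions}) and $S_n(g;x,\cQ)\ge(1-\eps)n$ a.s. (from Theorem~B and equation~\eqref{eq.linear.partitions}, using $h_\mu>0$), and multiplying through by $e^{nP_{\text{top}}(f,\phi)}$, this yields the desired lower bounds $\ge e^{n(h_\mu+c_{\phi,1})}$ for the $R_n$ case and $\ge e^{nc_{\phi,1}}$ for the $S_n$ case.

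The main obstacle is the ergodic estimate above. Direct Chebyshev fails because the $n$-dependent function $\mu(\cQ^{(n)}(\cdot))$ has variance exponentially larger than its squared mean in general (one checks that $\sum_C\mu(C)^3$ dominates $(\sum_C\mu(C)^2)^2$ on the exponential scale for non-constant $\phi$). In the $R_n$ case, one can nonetheless complete the argument by focusing on the extremal cylinders $C$ with $S_n\phi(C)\approx n\int\phi\,d\mu_{2\phi}$; these form a set of total $\mu$-measure $\asymp e^{-nI(\alpha^*)}$ where the level-1 rate function under $\mu_\phi$ satisfies $I(\alpha^*)=\alpha^*-c_{\phi,1}<h_\mu$, so along an orbit of length $R_n\approx e^{nh_\mu}$ the Borel--Cantelli lemma produces enough such visits for their contribution alone to exceed $e^{n(h_\mu+c_{\phi,1}-\eps)}$. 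In the $S_n$ case the orbit length is only linear in $n$, so this selection argument is more delicate; the proof presumably exploits the specific structure of the minimal return time, for instance by invoking a shadowing/periodic-point argument to produce inside $B_n(g;x,\cQ)$ a point $y$ whose Birkhoff sum realizes $\approx n\int\phi\,d\mu_{2\phi}$, and then transferring this contribution to the sum via the Gibbs distortion estimates.
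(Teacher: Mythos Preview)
For the $R_n$ inequality your strategy coincides with the paper's. After normalizing to $P_{\text{top}}(f,\phi)=0$ (so that $h_\mu(f)=-\int\phi\,d\mu$), the paper fixes $\de>0$, restricts to the set where $R_n(g;x,\cQ)\ge a_1(n):=e^{(h_\mu(f)-\de)n}$, and bounds
\[
\sum_{j=0}^{a_1(n)} e^{S_n\phi(f^j(x))}
\ \ge\ e^{-(h_\mu(f)-\de)n}\cdot\#\Big\{0\le j\le a_1(n):\ \tfrac1n S_n\phi(f^j(x))>\int\phi\,d\mu+\de\Big\}.
\]
An ergodicity argument replaces the count by $\tfrac12\, a_1(n)\,\mu(B_n(\de))$, the large-deviations lower bound gives $\tfrac1n\log\mu(B_n(\de))\ge-\hat I(\de)+o(1)$, and Legendre duality yields $\sup_\de\{\de-\hat I(\de)\}=c_{\phi,1}+h_\mu(f)$. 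Your ``extremal cylinders with $S_n\phi\approx n\int\phi\,d\mu_{2\phi}$'' are exactly the optimizing level set in this scheme, and your Gibbs rewriting is just the normalization $P_{\text{top}}(f,\phi)=0$ in disguise.

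For the $S_n$ inequality your proposal is genuinely incomplete: ``the proof presumably exploits \dots a shadowing/periodic-point argument'' is speculation, not an argument. The paper does \emph{not} invoke shadowing; it simply reruns the identical counting scheme with $a_2(n)=(1-\de)n$ in place of $a_1(n)$. Your instinct that a linear-length orbit segment is too short to see the extremal level set is, however, correct, and it exposes a real difficulty in the paper's own argument at this step. For $\mu_\phi$--a.e.\ $x$ the law of the iterated logarithm (valid for H\"older observables over mixing subshifts of finite type) gives
\[
\max_{0\le j\le n}\Big|S_n\phi(f^j(x))-n\!\int\!\phi\,d\mu_\phi\Big|=O\big(\sqrt{n\log\log n}\,\big),
\]
so for every fixed $\de>0$ one has $\#\{j\le(1-\de)n:f^j(x)\in B_n(\de)\}=0$ for all large $n$, and the asserted ergodic lower bound $\tfrac12\,a_2(n)\,\mu(B_n(\de))$ fails. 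The same estimate in fact forces $\tfrac1n\log\sum_{j=0}^{S_n(g;x,\cQ)} e^{S_n\phi(f^j(x))}\to\int\phi\,d\mu_\phi$ a.s., and one checks that $\int\phi\,d\mu_\phi<c_{\phi,1}$ whenever $\phi$ is not cohomologous to a constant (strictness of the variational inequality $P_{\text{top}}(f,2\phi)>h_{\mu_\phi}(f)+2\int\phi\,d\mu_\phi$). Hence the second inequality as stated cannot be obtained by your shadowing idea, nor by the paper's uniform counting approach.
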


\begin{proof}
We assume without loss of generality  that $P_{\text{top}}(\phi)=0$
and $\phi<0$, otherwise just take $\psi=\phi-P_{\text{top}}(\phi)$
which has zero topological pressure and, since equilibrium states associated to
subshifts of finite type have positive entropy then there exists some positive integer
$k$ such that $S_k\psi<0$. Recall that  the unique equilibrium state $\mu$
is always ergodic.

If $\phi$ is cohomologous to a constant, that is,
$\phi=\varphi\circ f -\varphi +c$ for some H\"older continuous $\varphi$ and $c\in\mathbb R$
then $0=P(\phi)=P(\varphi\circ f -\varphi +c)=P(c)=h_\text{top}(f)+c$ shows that $c=-h_\text{top}(f)$.
Analogously $P(2\phi)=h_\text{top}(f)+2c=-h_\text{top}(f)$. Hence,
using \eqref{eq.ent.partitions} and that $\cQ$ is a generator it follows that
$$
\lim_{n\to\infty} \frac1n \log \left[\sum_{j=0}^{R_n(g;x,\cQ)}
e^{S_n\phi (f^j(x))}\right]
    = 0
    = h_{\mu}(f) +
    P_{\text{top}}(2\phi)-P_{\text{top}}(\phi).
$$

So, it remains to consider only the case that $\phi$ is not cohomologous to a constant.
Pick $\delta>0$ small. Since
$\cQ$ is a generating partition for $f$ then one can pick $N\ge 1$
large enough so that the sets $X^1_\delta=\{x\in X : \log
R_n(g;x,\cQ) \geq (h_\mu(f)-\de) n, \forall n\geq N\} $ and
$X^2_\delta=\{x\in X : S_n(g;x,\cQ) \geq (1-\de) n, \forall n\geq
N\} $ have measure larger than $1-\de$. Consider also
$a_1(n)=e^{(h_\mu(f)-\de)n}$ and $a_2(n)=(1-\de)n$.
Since $P_{\text{top}}(\phi)=0$ and $\mu$ is an equilibrium state then $h_\mu(f)=-\int \phi \,d\mu$.
In consequence, 
\begin{align*}
\sum_{j=0}^{a_i(n)} e^{S_n\phi (f^j(x))}
    & = e^{-h_{\mu}(f)\, n} \; \sum_{j=0}^{a_i(n)} e^{S_n(\phi-\int \phi \,d\mu) (f^j(x))}\\
    & \geq e^{-(h_{\mu}(f)-\de)\, n} \; \# \left\{ 0\le j \le a_i(n) : \frac1n \left( S_n\phi (f^j(x)) -n \int \phi\, d\mu \right)
     >\de \right\} \\
     & \geq a_i(n) e^{-(h_{\mu}(f)-\de)\, n} \;
        \frac{\# \left\{ 0\le j \le a_i(n) : f^j(x)\in B_n(\de)\right\}}{a_i(n)}
\end{align*}
which, by ergodicity of $\mu$, is larger than $\frac12 a_i(n) e^{-(h_{\mu}(f)-\de)\, n} \mu(B_n(\de))$ provided that $n$
is large enough and where
$$
B_n(\de)
    =\left\{x\in X : \exists y \in \cQ^{(n)}(x) \text{ s.t. }  \frac1n \left( S_n\phi (y) -n\int \phi\, d\mu \right)
    >\de \right\}.
$$
The previous reasoning shows that for every $i=1,2$ and every $x\in X^i_\de$ will satisfy
\begin{align}
\liminf_{n\to\infty} \frac1n \log \left[\sum_{j=0}^{a_i(n)} e^{S_n\phi (f^j(x))}\right]
    & \geq \liminf_{n\to\infty} \frac1n \log [a_i(n) e^{-h_\mu(f) n}] \label{e.parte1} \\
    & + \left[ \de + \liminf_{n\to\infty} \frac1n \log \mu(B_\de(n)) \right]. \label{e.parte2}
\end{align}
It follows from the large deviations argument from \cite[page~10]{msuz} that the last summand is at
least $\sup_\de\{\de-\hat I(\de)\}$, where $\hat I(\de)=I(-h_\mu(f)+\de)$ and  $I$ denotes the Legendre transform
of the free energy at $\de$. Since the Legendre transform of $I$ is the free energy $c_{\phi,\mu}$ one gets that \eqref{e.parte2} is bounded from below by $c_{\phi,\mu}(1)+h_\mu(f)$.
Then, using that \eqref{e.parte1} is equal to zero when $i=1$ and is equal to $-h_\mu(f)$ when $i=2$, the lemma follows from the choice of the sets $X^i_\de$ and the arbitrariness of $\delta$.
\end{proof}

\subsection{Proof of Theorem D}

The present proof is inspired by some ideas of \cite{acs,ch}.
Given $\eta>0$ and $\delta>0$ small it follows as a simple consequence of
Birkhoff ergodic theorem and Theorems~\ref{thm.A} and~\ref{thm.B}
that there exists $\tilde\Sigma\subset\Sigma$ such that
$\mu(\tilde\Sigma)>1-\eta$ and  that the convergence is uniform in
$\tilde\Sigma$, that is, there exist uniform constants $\e_0>0$
and $N=N(\e_0)\in\mathbb N$ such that
\begin{equation}\label{eqbirkdelta}
\left|\frac{1}{n}\sum_{k=0}^{n-1}\varphi(f^kx)-\int\varphi d\mu\right| <\delta,
\end{equation}
\begin{equation}\label{eqrndelta}
\left|\frac{1}{n}\log R_n(g_1;x,\e)-h_\mu(f)\right|<\delta
\end{equation}
and
\begin{equation}\label{eqsndelta}
\left|\frac{1}{n}S_n(g_2;x,\e)-1\right|<\delta
\end{equation}
for every $0<\e<\e_0$, every $n\ge N$ and every $x\in \tilde\Sigma$.
Throughout the continuation of the proof, we assume that $\e$ is small and $n$ is large enough.
Given $(x,s)\in Y$ we also remark that
\begin{eqnarray*}
\tau_f(B_n(g_2;x,\eps)\times(s-\eps,s+\eps))&=&\tau_f(B_n(g_2;x,\eps)\times\{s\})-2\eps\\
&=&\underset{y\in B_n(g_2;x,\eps)}\inf\sum_{k=0}^{\tau_f(y,B_n(g_2;x,\eps))-1}\varphi(f^ky)-2\eps.
\end{eqnarray*}
where $\tau_f(y,B_n(g_2;x,\eps)):=\inf\{k\geq 1:f^ky\in B_n(g_2;x,\eps)\}$. By definition of a mistake
dynamical ball, for all $y\in B_n(g_2;x,\eps)$, it exists
$\Lambda_n(y)\subset\{0,\dots,n-1\}$ satisfying $\# \Lambda_n(y)\geq n-g_2(n,\eps)$ and
 such that $f^ky\in B(f^kx,\eps)$ for all $k\in\Lambda_n(y)$.

Now notice that if $y\in B_n(g_2;x,\eps)$ and $k\in\Lambda_n(y)$ then
\[
|\varphi(f^ky)-\varphi(f^kx)|\leq \alpha(\eps)
\]
where $\alpha(\eps)=\underset{z\in\Sigma}{\sup}\{|\varphi(z_1)-\varphi(z_2)|:z_1,z_2\in B(z,\eps)\}$ tends to
zero as $\eps$ tends to zero by uniform continuity of $\varphi$ in $\Sigma$.
Using also that  $|\varphi(f^ky)-\varphi(f^kx)|\leq 2||\varphi||_\infty$ for every
$k\in\cap\{0,\dots,n-1\}\setminus  \Lambda_n(y)$ one immediately gets
\begin{eqnarray}\label{eqalphalamg2}
\left|\sum_{k=0}^{n-1}\varphi(f^ky)-\varphi(f^kx)\right|&\leq&
\#\Lambda_n(y) \;\alpha(\eps)+ 2\|\varphi\|_\infty \; g_2(n,\eps)
\end{eqnarray}
for all $n>N$ and $y\in B_n(g_2;x,\eps)$.
Hence, given $n$ such that $\lfloor n(1-\delta)\rfloor>N$, equations
\eqref{eqsndelta}, \eqref{eqbirkdelta} and \eqref{eqalphalamg2} yield that
\begin{align*}
\underset{y\in B_n(g_2;x,\eps)}\inf\sum_{k=0}^{\tau_f(y,B_n(g_2;x,\eps))-1}\varphi(f^ky)&\geq \underset{y\in B_n(g_2;x,\eps)}\inf\sum_{k=0}^{S_n(g_2;x,\eps)-1}\varphi(f^ky)\\
&\geq \underset{y\in B_n(g_2;x,\eps)}\inf\sum_{k=0}^{\lfloor n(1-\delta)\rfloor-1}\varphi(f^ky),
\end{align*}
which, by construction, is bounded from below by
\begin{align*}
\sum_{k=0}^{\lfloor n(1-\delta)\rfloor-1} & \varphi(f^k x) - \alpha(\e) \lfloor (1-\delta n) \rfloor
    -  2\|\varphi\|_\infty\; g_2(\lfloor n(1-\delta)\rfloor,\eps) \\
& \geq \lfloor n(1-\delta)\rfloor\left(\int\varphi d\mu-\delta-\alpha(\eps)-2\|\varphi\|_\infty\frac{g_2(\lfloor n(1-\delta)\rfloor,\eps)}{\lfloor n(1-\delta)\rfloor}\right).
\end{align*}
On the other direction, consider a point $y_1\in B_n(g_2;x,\eps)$ for which the equality
$\tau_f(y_1,B_n(g_2;x,\eps))=S_n(g_2;x,\eps)$ holds. Then, a reasoning analogous to the
previous one is enough to show that
\begin{align*}
\underset{y\in B_n(g_2;x,\eps)}\inf & \sum_{k=0}^{\tau_f(y,B_n(g_2;x,\eps))-1}\varphi(f^ky) \\
    & \leq \lceil n(1+\delta)\rceil\left(\int\varphi d\mu+\delta+\alpha(\eps)+2||\varphi||_\infty\frac{g_2(\lceil n(1-\delta)\rceil,\eps)}{\lceil n(1+\delta)\rceil}\right).
\end{align*}
Finally, these lower and upper estimates together with \eqref{eqrndelta} give that the term
$
\frac{\log R_n(g_1;x,\e)}{\tau_f(B_n(g_2;x,\eps)\times(s-\eps,s+\eps))}
$
is bounded from below by
\begin{equation}\label{eqlogrnsurtausup}
\frac{n(1-\delta)h_\mu(f)}{\lceil n(1+\delta)\rceil\left(\int\varphi d\mu+\delta+\alpha(\eps)+2||\varphi||_\infty\frac{g_2(\lceil n(1+\delta)\rceil,\eps)}{\lceil n(1+\delta)\rceil}\right)-2\eps}
\end{equation}
and bounded from above by
\begin{equation}\label{eqlogrnsurtauinf}
\frac{n(1+\delta)h_\mu(f)}{\lfloor n(1-\delta)\rfloor\left(\int\varphi d\mu-\delta-\alpha(\eps)-2||\varphi||_\infty\frac{g_2(\lfloor n(1-\delta)\rfloor,\eps)}{\lfloor n(1-\delta)\rfloor}\right)-2\eps}.
\end{equation}
for every large $n$.
The theorem is now obtained for $\mu$-almost every $x$ using Abramov formula (see Lemma~\ref{l.Abramov}), the
arbitrariness of $\de$ and $\eta$, taking the limit superior (respectively the limit inferior) when $n$ tends to infinity in \eqref{eqlogrnsurtausup} and \eqref{eqlogrnsurtauinf} and then the limit when $\eps\rightarrow0$.


\begin{thebibliography}{999}

\bibitem{acs}
V. Afraimovich, J.-R. Chazottes and B. Saussol,
\newblock Pointwise dimensions for Poincar\'e recurrences associated with maps and special flows,
\newblock {\it Discrete Contin. Dyn. Syst.}, 9, pp. 263--280, 2003.

\bibitem{bsmult}
L. Barreira and B. Saussol,
\newblock Multifractal analysis of hyperbolic flows,
\newblock {\it Comm. Math. Phys.}, 214 no. 2, pp. 339--371, 2000.

\bibitem{Buz97}
J.~Buzzi.
\newblock Specification on the interval.
\newblock {\em Trans. Amer. Math. Soc.}, 349:2737--2754, 1997.

\bibitem{ch}
J.-R. Chazottes, Poincar\'e recurrences and entropy of suspended flows,
{\it Comptes Rendus Math.}, 332: 739--744, 2001.

\bibitem{dw}
T. Downarowicz and B. Weiss, Entropy theorems along times when $x$
visits a set, {\it Illinois J. Math.}, 48:59-69, 2004.

\bibitem{kat}
A. Katok, Fifty years of entropy in dynamics: 1958-2007, {\it
Journal of Modern Dynamics}, 1: 545-596, 2007.

\bibitem{kat1}
A. Katok, Lyapunov exponents, entropy and periodic points of
diffeomorphisms, {\it Publ. Math. IHES}, 51:137-173, 1980.

\bibitem{mr}
P. Marie and J. Rousseau, Recurrence for random dynamical system,
Preprint, 2009.

\bibitem{msuz}
V. Maume-Deschamps, B. Schmitt, M. Urba$\acute{n}$ski, and A.
Zdunik, Pressure and recurrence, {\it Fund. Math.}, 178:129-141,
2003.

\bibitem{mv}
A. Mes\'on and F. Vericat, Poincar\'e recurrence and topological pressure for
homeomorphisms with specification, {\it Far. East J. Dyn. Sys.}, 7:1--14, 2005.

\bibitem{ow}
D. Ornstein and B. Weiss, Entropy and data compression schemes,
{\it IEEE Trans. Inform. Theory}, 39(1):78-83, 1993.

\bibitem{ps}
C.-E. Pfister  and W.G. Sullivan, On the topological entropy of
saturated sets, {\it Ergod. Th. Dynam. Syst.}, 27:929-956, 2007.

\bibitem{re}
A. R\'enyi, Representations for real numbers and their ergodic properties,
{\it Acta Math. Acad. Sci. Hung.},  8: 477--493,1957.

\bibitem{thesej}
J.Rousseau, R\'ecurrence de Poincar\'e pour les observations, PhD thesis.

\bibitem{rvz}
J. Rousseau and P. Varandas and Y. Zhao, Entropy via recurrence in random dynamical systems,
Preprint 2010.

\bibitem{th}
D. Thompson, Irregular sets, the $\beta$-transformation and the
almost specification property, Preprint 2009.

\bibitem{var}
P. Varandas, Entropy and Poincar\'e recurrence from a
geometrical viewpoint, {\it Nonlinearity}, 22: 2365-2375, 2009.

\bibitem{zc}
Y. Zhao  and Y. Cao,  Measure-theoretic pressure for sub-additive
potentials,  {\it Nonlinear Analysis} 70: 2237-47, 2009.


\end{thebibliography}
\end{document}